\title{On ruled surfaces with big anti-canonical divisor and numerically trivial divisors on weak log Fano surfaces}
\author{Rikito Ohta}
\address{Department of Mathematics,
Graduate School of Science,
Osaka University,
Machikaneyama 1-1,
Toyonaka,
Osaka,
560-0043,
Japan.
}
\email{usamarotokame@gmail.com}
\author{Shinnosuke Okawa}
\address{
Department of Mathematics,
Graduate School of Science,
Osaka University,
Machikaneyama 1-1,
Toyonaka,
Osaka,
560-0043,
Japan.
}
\email{okawa@math.sci.osaka-u.ac.jp}
\date{\today}
\newcommand{\chara}{\operatorname{char}}
\newcommand{\norm}{\operatorname{norm}}
\begin{document}
\maketitle

\begin{abstract}
We investigate the structure of geometrically ruled surfaces whose anti-canonical class
is big. As an application we show that the Picard group of a normal projective surface whose anti-canonical class is nef and big is a free abelian group of finite rank.
\end{abstract}

\tableofcontents

\section{Introduction}
Let $X$ be a smooth projective surface over a field $\bfk$ whose anti-canonical divisor $-K_X$ is big. Although the classification of such surfaces is completely understood if $- K _{ X }$ is also nef, the question becomes much harder if we drop the nefness from the assumption. If $X$ is rational, then it is known to be a Mori dream space as is shown in \cite[Theorem 1]{MR2824848}. They also gave a kind of structure theorem for such surfaces \cite[Theorem 2]{MR2824848}.

Let us consider the case when $X$ is not rational.
Then $X$ is obtained by repeatedly blowing up a geometrically ruled surface (over a curve of positive genus) with big anti-canonical bundle,
since the Kodaira dimension of $X$ is $-\infty$.
In the first part of this paper, we study the structure of geometrically ruled surfaces whose anti-canonical divisor is big.
We will show the following theorem.

\begin{theorem}[$=$\pref{th:main_theorem}]\label{th:int1}
Let
$
 C
$
be a smooth projective curve of genus $g \ge 1$ and
$
 E
$
be an unstable vector bundle of rank $2$ on $C$.
Then $-K _{ \bP _{ C } (E) }$ is big if and only if
$
 E \simeq L \oplus M
$
for some line bundles $L$ and $M$ on $C$ such that
$
 \deg L- \deg M > 2 g - 2
$.
\end{theorem}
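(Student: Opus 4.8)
The plan is to reduce the statement to a single numerical invariant of $X$ and then to observe that the inequality governing bigness is exactly the one that forces the Harder--Narasimhan sequence of $E$ to split. Write $\pi\colon X=\bP_C(E)\to C$ for the projection, let $f$ be the class of a fibre, and let $C_0$ be the section of $\pi$ attached to the quotient $E\twoheadrightarrow E/L$, where $0\to L\to E\to E/L\to 0$ is the Harder--Narasimhan filtration; here $L$ is the line subbundle of maximal degree, which exists with $\deg L>\deg(E/L)$ precisely because $E$ is unstable. Set $e\coloneqq \deg L-\deg(E/L)>0$. The first step is intersection-theoretic bookkeeping: the Grothendieck relation gives $f^2=0$, $C_0\cdot f=1$ and $C_0^2=-e$, while the relative canonical bundle formula $\omega_{X/C}\cong\mathcal O(-2)\otimes\pi^*\det E$ yields
\[
 -K_X\equiv 2C_0+\bigl(e-(2g-2)\bigr)f,\qquad (-K_X)^2=8(1-g).
\]
(The $f$-coefficient is in any case forced by $C_0^2=-e$ together with the standard identity $(-K_X)^2=8(1-g)$, so it is independent of sign conventions.) Since $(-K_X)^2\le 0$ for $g\ge 1$, bigness cannot be read off from the self-intersection and must be analysed through the effective cone. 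Put $k\coloneqq e-(2g-2)$.

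For the bigness analysis I would first determine $\overline{NE}(X)$. Because $g\ge 1$ and $e>0$, the section $C_0$ is the unique irreducible curve of negative self-intersection, and the standard description of curves on a geometrically ruled surface shows that any irreducible curve is $C_0$, a fibre, or numerically $aC_0+bf$ with $a\ge 1$ and $b\ge ae$; hence $\overline{NE}(X)$ is spanned by $C_0$ and $f$, and dually a class is nef exactly when it meets both $C_0$ and $f$ nonnegatively. In particular the pseudo-effective cone is spanned by $C_0$ and $f$, so $-K_X=2C_0+kf$ is pseudo-effective only if $k\ge 0$; and when $k=0$ the class $2C_0$ is rigid (so $h^0(-mK_X)$ stays bounded), whence $-K_X$ is not big. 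When $k>0$ I would compute the Zariski decomposition $-K_X=P+N$ by writing $N=aC_0$ and imposing $P\cdot C_0=0$; this gives $a=1+(2g-2)/e\in[1,2)$, a nef positive part $P=(2-a)C_0+kf$, and
\[
 \operatorname{vol}(-K_X)=P^2=\frac{k^2}{e}>0 ,
\]
so $-K_X$ is big. Thus $-K_X$ is big if and only if $k>0$, i.e.\ if and only if $e>2g-2$.

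It remains to connect the numerical condition $e>2g-2$ with the algebraic splitting of $E$, and this is where the two formulations meet. The Harder--Narasimhan extension $0\to L\to E\to E/L\to 0$ is classified by an element of $\operatorname{Ext}^1(E/L,L)\cong H^1\bigl(C,\,L\otimes(E/L)^{-1}\bigr)$, and the line bundle $L\otimes(E/L)^{-1}$ has degree $e$. When $e>2g-2$, Serre duality gives $H^1\bigl(C,\,L\otimes(E/L)^{-1}\bigr)=0$, so the extension splits and $E\cong L\oplus(E/L)$; conversely, if $E\cong L\oplus M$ with $\deg L-\deg M>2g-2$, then $L$ is the maximal destabilizing subbundle and $e=\deg L-\deg M$. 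Combining this with the previous paragraph gives both implications, namely $-K_X$ big $\iff e>2g-2 \iff E\cong L\oplus M$ with $\deg L-\deg M>2g-2$.

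The cone and intersection bookkeeping of the first two paragraphs is routine. The conceptual crux, and the step I expect to require the most care, is isolating the single invariant $e=-C_0^2$ that simultaneously controls bigness of $-K_X$ (through the sign of $k=e-(2g-2)$) and the splitting of $E$ (through the vanishing of $H^1$ of a degree-$e$ line bundle). The main thing to get right rigorously is that, for an \emph{arbitrary} unstable $E$ and not only a split one, the quotient $E\twoheadrightarrow E/L$ genuinely produces a negative section, that this section is the unique negative curve, and that $\overline{NE}(X)=\langle C_0,f\rangle$; these facts are what guarantee the Zariski decomposition takes the simple form used above and hence make the equivalence with $e>2g-2$ valid before the splitting is known.
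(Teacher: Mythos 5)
Your proposal is correct, but it takes a genuinely different route from the paper's. The paper first proves the split case (\pref{cr:main_theorem_split_case}) by explicitly estimating $h^0$ of the relevant symmetric powers via lattice-point counts (\pref{pr:AA}, \pref{pr:BB}), and then treats a general unstable $E$ by degenerating it over the affine line $B = \bfk \xi$ spanned by the extension class to the split bundle $L \oplus M$ of its Harder--Narasimhan factors, using upper semicontinuity of $h^0$ to force bigness of $-K_{\bP_C(L\oplus M)}$; the splitting of $E$ is then deduced \emph{a posteriori} exactly as in your last paragraph, from $\Ext^1(M,L) = H^1(C, L\otimes M^{-1}) = 0$. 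You instead argue numerically on the surface itself: the Harder--Narasimhan subbundle produces the section $C_0$ with $C_0^2 = -e$, the closed cone of curves is $\langle C_0, f\rangle$ (the Hartshorne-V.2.21-type argument, which as you note works for arbitrary unstable $E$, not just split ones), and $-K_X \equiv 2C_0 + (e-(2g-2))f$ is big if and only if it lies in the interior of this cone, i.e.\ $e > 2g-2$. In fact, once you know that the big cone is the interior of the pseudo-effective cone --- a fact the paper itself invokes in \pref{rm:nakayama's_results} --- your Zariski-decomposition and volume computation is redundant, though harmless. Your route is essentially the cone-theoretic one the paper attributes to Nakayama in \pref{rm:nakayama's_results} and deliberately avoids in favor of a ``more elementary and naive'' argument: the paper's approach yields statements for arbitrary-rank split bundles and a degeneration technique that is reused elsewhere (\pref{pr:stability_g=1}, \pref{cr:structure_of_Frobenius_pullback}), while yours buys brevity and a transparent numerical criterion but is tied to rank $2$ (a single negative section). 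Two small points you should make explicit to be fully rigorous: the maximal destabilizing subsheaf of a rank-$2$ bundle on a smooth curve is saturated, so $E/L$ really is a line bundle and the quotient defines a section; and the cone description together with ``big $=$ interior of pseudo-effective'' holds over an algebraically closed field of any characteristic, which is needed since the theorem is not restricted to characteristic zero.
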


In \pref{pr:main_theorem_split_case} below, \pref{th:int1} is proven under the assumption that $E$ is decomposable.
In order to reduce the general case to this, we take a degeneration of $E$ to the direct sum of its Harder-Narasimhan factors and then apply \pref{pr:main_theorem_split_case} to it.
To conclude the proof, we \emph{a posteriori} show that $E$ in fact is a direct sum of its Harder-Narasimhan factors.

On the other hand, the instability of $E$, which we assumed in \pref{th:int1}, follows almost automatically from the bigness of $- K _{ X }$.

\begin{theorem}[$=$\pref{cr:sss_implies_big}]\label{th:int2}
Let $C$ be as in \pref{th:int1}, and $E$ be a vector bundle of rank $r$
on $C$ such that
$
 - K _{ \bP _{ C } \lb E \rb }
$
is big. Then $E$ is not strongly semi-stable; namely, the pull-back of $E$
by an iteration of the Frobenius morphism of $C$ is unstable.
In particular,
$E$ itself is unstable if
$
 \chara ( \bfk ) = 0
$
or $ g = 1 $.
\end{theorem}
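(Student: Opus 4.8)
The plan is to prove the contrapositive: assuming $E$ is strongly semi-stable, I will show that $-K_{\bP_C(E)}$ is not big. Write $X = \bP_C(E)$, let $\pi\colon X \to C$ be the projection, $\xi$ the class of the tautological bundle $\mathcal{O}_X(1)$, and $f$ a fibre of $\pi$; note $\dim X = r$. The first step is the relative canonical bundle formula $K_{X/C} = -r\xi + \pi^*\det E$, which together with $K_X = K_{X/C} + \pi^* K_C$ yields, after introducing the normalized tautological class $\lambda \coloneqq \xi - \tfrac{1}{r}\pi^*\det E$, the identity
\[
 -K_X = r\lambda - \pi^* K_C .
\]
Since $g \ge 1$ we have $\deg K_C = 2g-2 \ge 0$, so $-\pi^* K_C$ is numerically a non-positive multiple of $f$, and this non-positivity is exactly what will be exploited.

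The key input is Miyaoka's characterization of strong semi-stability through positivity of $\lambda$: a bundle $E$ on the curve $C$ is strongly semi-stable if and only if $\lambda$ is nef on $X$. The direction I need is that strong semi-stability \emph{forces} $\lambda$ to be nef, and I expect this to be the main obstacle, since in positive characteristic it genuinely requires the strong form of semi-stability — nefness of $\lambda$ is stable under the base changes $\bP_C\lb (F^e)^*E \rb \to \bP_C(E)$ induced by iterates $F^e$ of the Frobenius of $C$, so no weaker hypothesis would survive the pull-backs. Granting this, $\lambda$ is nef. A Grothendieck-relation computation over the curve base, where $\lb \pi^* c_1(E) \rb^2 = 0$ and $\xi^r = \pi^* c_1(E)\cdot \xi^{r-1}$ numerically, then gives $\lambda^r = 0$; hence the nef class $r\lambda$ has vanishing top self-intersection $(r\lambda)^r = 0$ and therefore vanishing volume.

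To conclude I compare $-K_X$ with $r\lambda$. The displayed formula reads $r\lambda = -K_X + (2g-2)f$, and $(2g-2)f$ is an effective divisor (zero when $g=1$); multiplication by its defining section embeds $H^0\lb X, m(-K_X) \rb$ into $H^0\lb X, m\,r\lambda \rb$ for all sufficiently divisible $m$, so by monotonicity of the volume
\[
 \operatorname{vol}(-K_X) \le \operatorname{vol}(r\lambda) = (r\lambda)^r = 0 .
\]
This argument is essential rather than cosmetic: the bare self-intersection number $(-K_X)^r$ is $-r^r(2g-2) \le 0$, which on its own does not preclude bigness, whereas the volume comparison does. Thus $-K_X$ is not big, establishing the contrapositive. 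For the final assertion, in characteristic $0$ strong semi-stability coincides with semi-stability, so its failure is just instability; and for $g=1$ one invokes the classical fact that semi-stability of a bundle on an elliptic curve is preserved under Frobenius pull-back (via Atiyah's classification of bundles on elliptic curves), whence again strongly semi-stable $=$ semi-stable and the conclusion reads "unstable."
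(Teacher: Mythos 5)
Your proof is correct, but it takes a genuinely different route from the paper's. The paper argues elementarily and splits into two cases: for $g \ge 2$ (\pref{pr:stability_g=2}), a nonzero section of $-mK_X$ is converted through $\pi_*\cO_X(rm) \simeq S^{rm}(E)$ into an injection of the line bundle $\cO_C(mK_C)\otimes(\wedge^r E)^{\otimes m}$, of slope $m(2g-2+\deg E)$, into $S^{rm}(E)$, of slope $m\deg E$; since $2g-2>0$ this destabilizes $S^{rm}(E)$, and the Ramanan--Ramanathan result on symmetric powers (\pref{pr:stable_property}) then shows $E$ is not strongly semi-stable. For $g=1$ (\pref{pr:stability_g=1}), where that slope gap vanishes, the paper instead assumes all $S^{rm}(E)$ semi-stable, degenerates $S^{rm}(E)\otimes(\wedge^r E)^{-m}$ to the graded object of a Jordan--H\"older filtration, and uses semicontinuity together with $h^0\le 1$ for stable bundles of slope zero to get $h^0(-mK_X)\le\binom{r-1+mr}{mr}=O(m^{r-1})$, killing bigness. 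Your proof runs uniformly for all $g\ge 1$ and outsources the hard content to Miyaoka's numerical criterion: strong semi-stability of $E$ is equivalent to nefness of the normalized class $\lambda$, whence $\operatorname{vol}(r\lambda)=(r\lambda)^r=0$ by the Grothendieck relation, and $\operatorname{vol}(-K_X)\le\operatorname{vol}(r\lambda)$ because $r\lambda=-K_X+\pi^*K_C$ with $\pi^*K_C$ (linearly, not merely numerically) effective when $g\ge 1$ --- a point better phrased with $\pi^*K_C$ than with $(2g-2)f$, since $h^0$ only sees linear equivalence. This is precisely the route the paper acknowledges and deliberately avoids in \pref{rm:nakayama's_results}: your ``key input'' is \cite[Theorem 3.1]{MR946247}, on which Nakayama's lemma is based, and the authors note that their own arguments are ``more elementary and naive.'' What your approach buys is brevity, uniformity in $g$, and a clean conceptual picture; you also rightly observe that the volume comparison is essential, since $(-K_X)^r=-r^r(2g-2)\le 0$ alone does not preclude bigness. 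What it costs is self-containedness: the direction of the criterion you need (strongly semi-stable $\Rightarrow$ $\lambda$ nef) is a substantial theorem in characteristic $p$, whose proof requires machinery at least comparable to the Ramanan--Ramanathan input that the paper uses. Your handling of the final clause ($\chara(\bfk)=0$ or $g=1$) coincides with the paper's \pref{pr:stable_property}(2).
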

We remark that \pref{th:int1} and \pref{th:int2} follow also from Nakayama's work \cite[Chapter IV. 3.7. Lemma]{MR2104208}, which gives a criterion for the pseudo-effectiveness of line bundles on projective bundles based on a numerical criterion for semi-stability studied in \cite{MR946247} (see \pref{rm:nakayama's_results} for details.).
Compared to this, our arguments are more elementary.

An example of a curve of genus $g \ge 2$ and a rank $2$
semi-stable vector bundle on it which is not strongly semi-stable is constructed in \cite[Theorem 1]{MR0325616}.
The anti-canonical divisor of the projective bundle associated to the example, however, does not
seem to be big.
Hence the following subtle question remains open.

\begin{question}
Is $E$ itself unstable under the assumption of \pref{th:int2}?
\end{question}

Summing up the two results above, we obtain the following corollary.

\begin{corollary}\label{cr:main_cor}
\ 
\begin{enumerate}
\item
Suppose either
$
 \chara ( \bfk ) = 0
$
or $ g = 1 $. Then
$
 - K _{ X }
$
is big if and only if
$
 E \simeq L \oplus M
$
for some line bundles $L$ and $M$ on $C$ such that
$
 \deg L - \deg M > 2 g - 2
$.

\item
Suppose that
$
 \chara ( \bfk ) > 0
$
and
$
 g > 1.
$
If
$
 - K _{ X }
$
is big, then there exists
$
 e \ge 0
$
such that
$
 \lb F ^{ e } \rb ^{ * } E \simeq L ' \oplus M '
$
for some line bundles $L '$ and $M '$ on $C$ such that
$
 \deg L ' - \deg M ' > 2 g - 2
$,
where $F$ is the Frobenius map of $C$.
\end{enumerate}
\end{corollary}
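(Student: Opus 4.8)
The plan is to read off both statements directly from \pref{th:int1} and \pref{th:int2}, the only genuine work being, in positive characteristic, a comparison of the bigness of the anti-canonical divisor under a Frobenius base change. For part (1), I would treat the two implications separately. For the backward direction, if $E \simeq L \oplus M$ with $\deg L - \deg M > 2g - 2 \ge 0$, then $\deg L > \deg M$, so $L \hookrightarrow E$ destabilizes $E$; hence $E$ is unstable and \pref{th:int1} gives that $-K_X$ is big. For the forward direction, assuming $-K_X$ is big, I would invoke the final clause of \pref{th:int2}: under the hypothesis $\chara(\bfk) = 0$ or $g = 1$, bigness of $-K_X$ forces $E$ to be unstable, after which \pref{th:int1} produces the splitting with the required degree inequality.

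For part (2), assume $\chara(\bfk) > 0$, $g > 1$, and $-K_X$ big. I would first apply \pref{th:int2} to produce an integer $e \ge 0$ for which $(F^e)^* E$ is unstable, this being exactly the failure of strong semi-stability. Setting $Y := \bP_C((F^e)^* E)$ with projection $\pi_Y$, and using that $Y$ is the base change of $\pi_X \colon X \to C$ along $F^e$, I would obtain a finite surjective morphism $\phi \colon Y \to X$ satisfying $\pi_X \circ \phi = F^e \circ \pi_Y$. The crux is to check that $-K_Y$ is again big: granting this, $(F^e)^* E$ is an unstable rank-$2$ bundle whose projectivization has big anti-canonical divisor, so \pref{th:int1} applied to $(F^e)^* E$ yields $L' \oplus M'$ with $\deg L' - \deg M' > 2g - 2$.

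To establish bigness of $-K_Y$, I would use that $\pi_X$ is smooth, so that the relative canonical sheaf is stable under base change, $\omega_{Y/C} \simeq \phi^* \omega_{X/C}$. Combining this with $K_Y = \pi_Y^* K_C + \omega_{Y/C}$ and $\phi^* K_X = \pi_Y^* (F^e)^* K_C + \phi^* \omega_{X/C}$ gives
\[
 -K_Y = \phi^*(-K_X) + \pi_Y^*\bigl((F^e)^* K_C - K_C\bigr).
\]
Here $\phi^*(-K_X)$ is big, since the pullback of a big divisor along a finite surjective morphism of surfaces is big; and $(F^e)^* K_C - K_C$ has degree $(\deg F^e - 1)(2g - 2) \ge 0$ because $g > 1$, hence is nef on $C$, so its pullback $\pi_Y^*\bigl((F^e)^* K_C - K_C\bigr)$ is nef on $Y$. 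As a sum of a big and a nef divisor, $-K_Y$ is big.

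I expect the last step — transporting bigness from $-K_X$ to $-K_Y$ — to be the main obstacle. The difficulty is that the Frobenius is inseparable, so the usual ramification formula $K_Y = \phi^* K_X + R$ with effective $R$ is unavailable and a naive comparison fails. The remedy is to exploit smoothness of $\pi_X$ to get base-change invariance of $\omega_{X/C}$, which isolates the discrepancy between $-K_Y$ and $\phi^*(-K_X)$ as a pullback from the base curve; the hypothesis $g > 1$ is precisely what makes this discrepancy nef. This also explains why the genus-one case is absorbed into part (1) rather than part (2).
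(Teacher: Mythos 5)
Your proposal is correct and takes essentially the same approach as the paper: part (1) is the same formal combination of \pref{th:int1} and \pref{th:int2}, and for part (2) your identity $-K_Y=\phi^*(-K_X)+\pi_Y^*\bigl((F^e)^*K_C-K_C\bigr)$ is precisely the divisor-level form of the relation $\pi'_*\mathcal{O}_{Y'}(-mK_{Y'})=f^*\bigl(\pi_*\mathcal{O}_Y(-mK_Y)\bigr)\otimes\bigl(f^*\omega_C\otimes\omega_C^{-1}\bigr)^{\otimes m}$ that the paper uses in \pref{cr:structure_of_Frobenius_pullback} to transfer bigness across the Frobenius base change. Where the paper performs the $h^0$ comparison by hand (via $\mathcal{O}_C\hookrightarrow f_*\mathcal{O}_C$ and a nonzero section of $\bigl(f^*\omega_C\otimes\omega_C^{-1}\bigr)^{\otimes m}$), you invoke the equivalent standard facts that bigness is preserved under pullback by finite surjective morphisms and that the sum of a big and a nef divisor is big.
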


The second aim of this paper is to show the following version of the base point free theorem as an application of \pref{cr:main_cor}.

\begin{theorem}[$=$\pref{cr:adj}]\label{th:adj}
Let $X$ be a normal projective surface over a field $\bfk$ which admits an $\bR$-divisor $\Delta$ such that
$\lfloor \Delta \rfloor = 0$
and
$
 - ( K_X + \Delta )
$
is a nef and big $\bR$-Cartier divisor.
Then $\Pic ( X )$ is a free abelian group of finite rank.
\end{theorem}

The base point free theorem for surfaces in positive characteristic is shown in \cite[Theorem 3.2]{MR3383599} for numerically \emph{non}-trivial nef divisors. \pref{th:adj} is nothing but the base point free theorem for numerically trivial divisors. This is partially shown in \cite[Corollary 3.6]{MR3383599} under the assumption that $X$ admits only rational singularities. In this case $X$ is necessarily $\bQ$-factorial, so one can use the minimal model program to obtain the conclusion. A typical example which is not covered there is the cone over a non-singular plane cubic curve.
By combining \pref{th:adj} and \cite[Theorem 3.2]{MR3383599}, we obtain the following complete form of the base point free theorem for normal projective surfaces.

\begin{theorem}[Base point free theorem]
Let $X$ be a normal projective surface over a field $\bfk$, $\Delta$ an $\bR$-divisor such that
$\lfloor \Delta \rfloor = 0$, and $D$ be a nef Cartier divisor
such that
$
a D- (K_X + \Delta )
$ 
is a nef and big $\bR$-Cartier divisor for some 
$
a \in \bZ_{>0}
$.
Then there exists
$
b_0 \in \bZ_{ > 0 }
$ 
such that $|bD|$ is base point free for any integer $b \ge b_0$.
\end{theorem}

In fact \pref{th:adj} for the case when $X$ admits a non-rational singularity is covered by \cite[Theorem 2.2 (iii)]{MR1821186} as well.
Hence we can also show \pref{th:adj} by combining it with \cite[Theorem 3.2]{MR3383599} (see \pref{sc:combination}).
Our method, however, allows us to investigate the following question concerning the boundary case.

\begin{question}
Does the conclusion of \pref{th:adj} hold if we allow some of the coefficients of 
$\Delta$
to be 1?
\end{question}
If we assume that
$
 - \lb K _{ X } + \Delta \rb
$
is ample, then our proof based on \pref{cr:main_cor} works equally well and we get the same conclusion.
However, the assertion of \pref{th:adj} does not necessarily hold when
$
 - \lb K _{ X } + \Delta \rb
$
is assumed to be only nef and big (see \pref{eg:counter}).
In \pref{sc:Delta=1} we investigate when the assertion of \pref{th:adj} fails,
by closely examining the proof of \pref{cr:adj}. For example, it is shown that $X$ needs to be birationally equivalent to the product of $\bP ^{ 1 }$ and a curve of genus one for the failure.

In the last section we construct some examples of non-minimal ruled surfaces with big anti-canonical divisor, by blowing up geometrically ruled surfaces along certain configurations of points.

\subsection*{Acknowledgements}

The authors are indebted to Hiromu Tanaka for the very useful discussion and
informing them of the current state of the art of the base point free theorem on surfaces. They also thank Kazuhiro Konno for informing them of the works by Noboru Nakayama and Yoichi Miyaoka.
S.~O.~ was partially supported by Grants-in-Aid for Scientific Research
(16H05994,
16K13746,
16H02141,
16K13743,
16K13755,
16H06337)
and the Inamori Foundation.

\subsection*{Notation and conventions}
The ground field, which will be assumed to be
algebraically closed of arbitrary characteristic unless otherwise stated,
will be denoted by $\bfk$.
A \emph{curve} (respectively, \emph{surface}) is a projective and geometrically integral scheme over $\bfk$ of dimension one (resp. two). For a vector bundle ($=$locally free sheaf) $E$ on a scheme $C$, the \emph{associated projective bundle} will be defined as
$
 \bP _{ C } \lb E \rb = \Proj _{ C } \lb \Sym E \rb
$.
An $\bR$-Cartier divisor $D$ on a projective scheme is said to be \emph{nef}
(respectively \emph{big}) if the intersection number with any integral curve is
non-negative (resp. if there is a positive constant $c \in \bR_{>0} $ such that
$
 \dim_ \bfk H^0(\lfloor k D \rfloor ) > c k ^2
$
holds for any sufficiently large integer $k$).
We will use the shorthand notation
$
 \hom =\dim \Hom, h ^{ i } = \dim H ^{ i },
$
etc.

\section{Stability of vector bundles on curves} 

We prove some facts about stability of bundles on curves.
Throughout this section $C$ is a smooth projective curve, $E$ is a vector bundle of rank $r$ on it, and $\pi \colon X \coloneqq \bP _{ C } (E) \to C$ is the projective bundle associated to $E$.
\begin{definition}\label{Def:stab}
The \emph{slope} of $E$ 
is defined by
\begin{align}
 \mu \lb E \rb \coloneqq \frac{\deg E}{ \rank E}.
\end{align}
$E$ is said to be \emph{semi-stable}
(respectively, \emph{stable}) if for any subsheaf
$
 0 \subsetneq V \subsetneq E
$
the inequality
\begin{align*}
 \mu(V) \le \mu(E) \quad ( \mbox{resp.} \  < )
\end{align*}
holds. $E$ is \emph{strongly semi-stable} if the vector bundle
\begin{align*}
E^{(r)} \coloneqq (F^r)^* E =
(\overbrace{F\circ \dots \circ F} ^{r \ \mathrm{times}})^*E 
\end{align*}
over $C$ is semi-stable for all $r\ge 0$, where 
$
F \colon C \to C
$ 
is the Frobenius morphism.
\end{definition}
Note that if $\chara(\bfk)=0$, there is no difference between semi-stability and strong semi-stability.

\begin{proposition}\label{pr:stable_property}
\ 
\begin{enumerate}
\item\label{it:symm_inherits_stability}
If $E$ is strongly semi-stable, 
the $n$-th symmetric power $ S ^{ n } ( E ) $ is also strongly semi-stable.

\item
Assume $g(C) \le 1$. Then $E$ is semi-stable if and only if it is strongly semi-stable.

%
\end{enumerate}
\end{proposition}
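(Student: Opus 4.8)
The plan is to handle the two assertions separately, reducing each to a single Frobenius step combined with an input about how semistability behaves under standard functorial constructions. Throughout, the one essential idea I would isolate first is the following elementary lemma: if $V$ is semistable and $W \subseteq V$ is a subsheaf with $\mu(W) = \mu(V)$, then $W$ is semistable, since any $W' \subseteq W$ is a subsheaf of $V$ and hence satisfies $\mu(W') \le \mu(V) = \mu(W)$. Dualizing, a \emph{quotient} $Q$ of a semistable bundle $V$ with $\mu(Q) = \mu(V)$ is semistable, because $Q^\vee \subseteq V^\vee$ has the same slope as the semistable bundle $V^\vee$.

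For the first statement, I would begin by invoking the theorem of Ramanan and Ramanathan that the tensor product of two strongly semistable bundles on a smooth projective curve is again strongly semistable; by induction this gives that $E^{\otimes n}$ is strongly semistable whenever $E$ is. Next I would realize $S^n(E)$ as the quotient bundle of $E^{\otimes n}$ (the natural surjection $E^{\otimes n} \twoheadrightarrow S^n(E)$ exists in every characteristic) and compare slopes: the splitting principle gives $\mu(S^n E) = n\,\mu(E) = \mu(E^{\otimes n})$. By the lemma above, $S^n(E)$ is therefore semistable. Finally, since Frobenius pullback commutes with both tensor and symmetric powers, one has $(F^e)^* S^n(E) = S^n\bigl((F^e)^* E\bigr)$, which is a quotient of $\bigl((F^e)^* E\bigr)^{\otimes n} = (F^e)^*(E^{\otimes n})$ of the same slope; as the latter is semistable for every $e$, the lemma shows $S^n(E)$ is strongly semistable.

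For the second statement, the implication strongly semistable $\Rightarrow$ semistable is trivial (take $e = 0$), and by iteration the converse reduces to proving: if $g(C) \le 1$ and $E$ is semistable, then $F^* E$ is semistable. When $g = 0$ the bundle $E$ splits as $\bigoplus \mathcal{O}(a_i)$ with all $a_i$ equal by semistability, and $F^* \mathcal{O}(a) = \mathcal{O}(pa)$ keeps the splitting balanced, so $F^* E$ is semistable. For $g = 1$ I would argue by contradiction using the canonical Cartier connection $\nabla \colon F^* E \to F^* E \otimes \Omega^1_C$. Let $V \subseteq F^* E$ be the maximal destabilizing subsheaf, so $V$ is semistable with $\mu(V) > \mu(F^* E) = p\,\mu(E)$, and consider the $\mathcal{O}_C$-linear second fundamental form $\psi \colon V \to (F^* E / V) \otimes \Omega^1_C$ obtained from $\nabla$. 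If $\psi = 0$, then $V$ is $\nabla$-horizontal, so by Cartier descent $V = F^* W$ for a subbundle $W \subseteq E$, whence $\mu(W) = \mu(V)/p > \mu(E)$ contradicts the semistability of $E$. If $\psi \ne 0$, its image is a quotient of the semistable bundle $V$, hence of slope $\ge \mu(V)$, while it is also a subsheaf of $(F^* E / V) \otimes \Omega^1_C$, hence of slope $\le \mu_{\max}(F^* E/V) + \deg \Omega^1_C = \mu_{\max}(F^* E/V) < \mu(V)$, a contradiction.

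The genus-one case is the main obstacle, and the argument closes up precisely because $\deg \Omega^1_C = 2g - 2 = 0$ when $g = 1$; this is exactly what forces the restriction $g \le 1$ and is, in effect, the vanishing instance of X.\ Sun's general bound $\mu_{\max}(F^* E) - \mu_{\min}(F^* E) \le (2g-2)(p-1)$ for the Frobenius pullback of a semistable bundle. As an alternative to the connection argument one could simply cite this bound and observe that it is nonpositive for $g \le 1$, which immediately yields the semistability of $F^* E$.
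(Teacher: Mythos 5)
Your proof is correct, and it takes a genuinely different route from the paper: the paper gives no argument at all for this proposition, settling (1) by citing \cite[Theorem 7.2]{MR1488349} and (2) by citing \cite[THEOREM 2.9]{MR2483939} or \cite[Theorem 2.1]{MR714755}. For (1) you still keep one black box of comparable depth (the Ramanan--Ramanathan theorem that tensor products of strongly semistable bundles are strongly semistable), but you make the passage to symmetric powers explicit: $S^n(E)$ is a quotient of $E^{\otimes n}$ of the same slope $n\mu(E)$, equal-slope quotients of semistable bundles are semistable, and $(F^e)^*S^n(E)\cong S^n\bigl((F^e)^*E\bigr)$; so your (1) is more transparent than the paper's, though not more elementary. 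The real gain is in (2), where your argument is self-contained while the paper's is not: Grothendieck splitting handles $g=0$, and for $g=1$ the canonical connection $\nabla$ on $F^*E$ forces a contradiction --- the maximal destabilizing subsheaf $V$ either is $\nabla$-stable, hence descends by Cartier to a destabilizing subsheaf of $E$, or has nonzero second fundamental form into $(F^*E/V)\otimes\Omega^1_C$, which is impossible since $\mu_{\max}(F^*E/V)+\deg\Omega^1_C<\mu(V)$ when $g\le 1$. This pinpoints exactly where the hypothesis $g\le 1$ enters (namely $\deg\Omega^1_C\le 0$), which the citations hide. Two small caveats, neither of which affects correctness: your dualization proof of the equal-slope-quotient lemma tacitly assumes the quotient is locally free (true for $S^n(E)$; in general one argues with preimages of subsheaves instead), and the precise constant in the Sun/Shepherd-Barron instability bound you quote at the end is immaterial here, since any bound of the form $c(2g-2)$ with $c\ge 0$ yields semistability of $F^*E$ for $g\le 1$.
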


\begin{proof}
(1) is the consequence of \cite[Theorem $7.2$]{MR1488349}.
See \cite[THEOREM 2.9]{MR2483939} or \cite[Theorem 2.1]{MR714755} for (2).
\end{proof}

\begin{proposition}\label{pr:hom}
Let $E$ and $F$ be stable vector bundles of the same slope.
Then every non-zero map $f \colon E \to F$ is an isomorphism and
$ \hom (E,F)\le 1$.
\end{proposition}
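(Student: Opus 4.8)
The plan is to prove the isomorphism statement first and then deduce the dimension bound formally. Given a non-zero map $f \colon E \to F$, I would analyse its kernel $K \coloneqq \ker f$ and its image $I \coloneqq \operatorname{im} f$, both regarded as honest subsheaves, and compare slopes. The observation I would record at the outset is the ``seesaw'' reformulation of stability: from the exact sequence $0 \to K \to E \to I \to 0$ one has $\deg E = \deg K + \deg I$ and $\rank E = \rank K + \rank I$, so for a proper non-zero subsheaf $K$ the inequality $\mu(K) < \mu(E)$ is equivalent to $\mu(I) > \mu(E)$ for the corresponding quotient $I = E/K$. Thus stability of $E$ says simultaneously that proper non-zero subsheaves have strictly smaller slope and proper non-zero quotients have strictly larger slope.

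With this in hand the case analysis is short. Since $f \neq 0$, the image $I$ is a non-zero subsheaf of $F$ and a non-zero quotient of $E$; in particular $K \neq E$. If $K \neq 0$, then $0 \subsetneq K \subsetneq E$, so $I$ is a proper non-zero quotient of $E$ and hence $\mu(I) > \mu(E) = \mu(F)$; but since $I \subseteq F$ is a non-zero subsheaf, stability of $F$ gives $\mu(I) \le \mu(F)$ (with strict inequality unless $I = F$), a contradiction. Therefore $f$ is injective and $I \cong E$, so $\mu(I) = \mu(E) = \mu(F)$. If moreover $I$ were a proper subsheaf of $F$, stability of $F$ would force $\mu(I) < \mu(F)$, again a contradiction; hence $I = F$ and $f$ is surjective. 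An injective and surjective morphism of coherent sheaves is an isomorphism, which settles the first assertion.

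For the bound $\hom(E,F) \le 1$ I would use that a stable bundle is simple. Applying the first part with $F = E$, every non-zero endomorphism of $E$ is an isomorphism, so $\operatorname{End}(E)$ is a finite-dimensional division algebra over $\bfk$; since $\bfk$ is algebraically closed this forces $\operatorname{End}(E) = \bfk$. Now if $\Hom(E,F) \neq 0$, choose a non-zero (hence, by the first part, invertible) element $g \in \Hom(E,F)$; the map $f \mapsto g^{-1} \circ f$ is a $\bfk$-linear isomorphism $\Hom(E,F) \xrightarrow{\ \sim\ } \operatorname{End}(E) = \bfk$, so $\hom(E,F) = 1$. The only genuinely substantive part is the slope bookkeeping of the first two paragraphs; the division-algebra step is routine once algebraic closedness of $\bfk$ is invoked, and I anticipate no real obstacle beyond making sure that the image and kernel are treated as subsheaves, so that the stated definition of stability applies verbatim.
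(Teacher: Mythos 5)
Your proof is correct. A comparison with the paper is a bit lopsided here, because the paper gives no argument at all: its ``proof'' is a citation of \cite[Proposition 5.3.3]{MR1428426} and \cite[Corollary 5.3.4]{MR1428426}, and the content of those cited results is precisely the standard argument you wrote out — the kernel/image slope comparison forcing any non-zero map to be an isomorphism, followed by simplicity of stable bundles to get $\hom(E,F) \le 1$. So your write-up is a self-contained replacement for what the paper delegates to the literature, which is a reasonable thing to want in a paper whose stated aim is to keep the stability arguments elementary. Two points deserve to be made explicit. First, your general ``seesaw'' statement is only meaningful when the quotient $E/K$ has positive rank (if $K$ is a full-rank, non-saturated subsheaf, the quotient is a torsion sheaf and its slope is undefined); in your application this is automatic, since the image $I$ embeds in the locally free sheaf $F$ on a smooth curve and is therefore torsion-free of positive rank, but the caveat should be stated. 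Second, the step $\operatorname{End}(E)=\bfk$ genuinely uses that $\bfk$ is algebraically closed — over a general field one only gets a finite-dimensional division algebra, and then $\hom(E,F)$ can equal its dimension, which may exceed $1$; you correctly invoked algebraic closedness, and the paper's conventions do guarantee it, so no gap arises.
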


\begin{proof}
This is an easy consequence of \cite[Proposition 5.3.3]{MR1428426} and 
\cite[Corollary 5.3.4]{MR1428426}. 
\end{proof}

\begin{proposition}\label{pr:stability_g=2}
Suppose
$
 g ( C ) \ge 2
$
and
$
 H ^{ 0 } ( X, \cO _{ X } \lb - m K _{ X } \rb ) \neq 0
$
for some positive integer $m$. Then $E$ is not strongly semi-stable.
\end{proposition}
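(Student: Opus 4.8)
The plan is to translate the hypothesis $H^0(X, \cO_X(-mK_X)) \neq 0$ into the existence of a line subbundle of a symmetric power of $E$ whose degree is too large, and then to contradict the semi-stability that is inherited by symmetric powers. First I would record the relative canonical bundle formula $\omega_{X/C} \cong \cO_X(-r) \otimes \pi^{*}\det E$, obtained by taking determinants in the relative Euler sequence. Twisting by $\pi^{*}\omega_C$ and raising to the $(-m)$-th power gives
\begin{align*}
 \cO_X(-mK_X) \cong \cO_X(mr) \otimes \pi^{*}\bigl((\det E)^{\otimes(-m)} \otimes \omega_C^{\otimes(-m)}\bigr).
\end{align*}
Applying $\pi_{*}$ together with the projection formula and the identity $\pi_{*}\cO_X(k) = S^{k}(E)$ for $k \ge 0$ (valid under the convention $\bP_C(E) = \Proj(\Sym E)$) yields a canonical isomorphism
\begin{align*}
 H^0(X, \cO_X(-mK_X)) \cong H^0\bigl(C,\, S^{mr}(E) \otimes (\det E)^{\otimes(-m)} \otimes \omega_C^{\otimes(-m)}\bigr).
\end{align*}
Thus the hypothesis supplies a nonzero global section of the bundle on the right.

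Next I would reinterpret such a section as a nonzero homomorphism $L \to S^{mr}(E)$ from the line bundle $L \coloneqq (\det E)^{\otimes m} \otimes \omega_C^{\otimes m}$. Since $L$ is a line bundle and the target is locally free, any nonzero such map is injective, so $L$ embeds as a rank-one subsheaf of $S^{mr}(E)$. Note $\deg L = m(\deg E + (2g-2))$.

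Now I would argue by contradiction. Suppose $E$ is strongly semi-stable. By \pref{pr:stable_property}(1) the symmetric power $S^{mr}(E)$ is again strongly semi-stable, in particular semi-stable, so every rank-one subsheaf has slope at most $\mu(S^{mr}(E))$. A standard splitting-principle computation gives $\mu(S^{n}(E)) = n\,\mu(E)$, hence $\mu(S^{mr}(E)) = mr \cdot \frac{\deg E}{r} = m\deg E$. Applying the semi-stability bound to the subsheaf $L$ gives $m(\deg E + 2g - 2) \le m\deg E$, i.e.\ $m(2g-2) \le 0$, which is absurd since $m \ge 1$ and $g \ge 2$. This contradiction shows that $E$ is not strongly semi-stable.

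The only genuinely delicate point is the bookkeeping in the first paragraph: pinning down the precise relative canonical formula and the twist so that $\deg L$ comes out exactly as $m(\deg E + 2g - 2)$. Everything after that is formal, and the positivity of $2g - 2$ — the single place where the hypothesis $g \ge 2$ enters — is precisely what converts the slope inequality into the desired contradiction.
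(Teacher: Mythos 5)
Your proof is correct and follows essentially the same route as the paper's: both use the canonical bundle formula for $\bP_C(E)$ to convert the nonzero section of $\cO_X(-mK_X)$ into an injective map from the line bundle $\omega_C^{\otimes m}\otimes(\det E)^{\otimes m}$ of slope $m(\deg E + 2g-2)$ into $S^{rm}(E)$ of slope $m\deg E$, and then invoke the fact that symmetric powers of strongly semi-stable bundles are strongly semi-stable to derive the contradiction from $g\ge 2$. The only cosmetic difference is that the paper phrases the destabilization directly ($S^{rm}(E)$ is unstable, hence $E$ is not strongly semi-stable) while you argue by contradiction; the content is identical.
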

 
\begin{proof} 
Recall that the canonical sheaf of $X$ admits the following isomorphism.
\begin{align}
 \cO _{ X } (K_X) \simeq
 \cO _{ X } ( - r ) \otimes \pi ^* ( \cO _{ C } ( K _{ C } ) \otimes \wedge ^{ r } E)
\end{align}
Hence a non-zero global section of
$
 \cO _{ X } \lb - m K _{ X } \rb
$
corresponds to a non-trivial morphism
\begin{align}
 \pi^*(\cO _{ C }(mK_C) \otimes (\wedge^r E)^{\otimes m}) \to \cO _{ X } (rm),
\end{align}
whose adjoint is a non-trivial morphism
\begin{align}
 \cO _{ C } ( m K _{ C } ) \otimes (\wedge^r E)^{\otimes m} \to \pi_*\cO _{ X } ( r m ) \simeq S ^{ r m } ( E ),
\end{align}
which necessarily is an injective morphism.
On the other hand, the slopes of these two bundles are
$
 m ( 2 g - 2 + \deg E )
$
and
$
 m ( \deg E )
$, respectively
since we can easily compute that
\begin{align*}
 \rank (S ^{ n } ( E ))= \binom{r-1+n}{n},\ 
 \operatorname{and} \ 
 \deg (S^n (E)) = \binom{r-1+n}{r} \deg E
\end{align*}
for any $ n\in \bZ_{\ge 0}$.
 Hence $S^{rm}(E)$ is unstable. By \pref{pr:stable_property} (\pref{it:symm_inherits_stability}), we see that $E$ is not strongly semi-stable.
\end{proof}

We next discuss the case when $g ( C ) = 1$.

\begin{proposition}\label{pr:stability_g=1}
Suppose
$
 g \lb C \rb = 1
$.
If $-K_X$ is big, then $E$ is not strongly semi-stable.
\end{proposition}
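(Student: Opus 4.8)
The plan is to estimate the growth of $h^0(X, -mK_X)$ as $m$ grows and to show that, once $E$ is assumed strongly semi-stable, this growth is too slow for $-K_X$ to be big. Since $g(C)=1$ the canonical divisor $K_C$ is trivial, so the canonical bundle formula recalled in the proof of \pref{pr:stability_g=2} becomes $\cO_X(K_X)\simeq \cO_X(-r)\otimes \pi^*(\wedge^r E)$. Twisting by $-mK_X$ and combining the projection formula with $\pi_*\cO_X(rm)\simeq S^{rm}(E)$, I would identify
\[
 H^0(X,\cO_X(-mK_X))\simeq H^0(C, S^{rm}(E)\otimes(\wedge^r E)^{-m})
\]
for every $m\ge 0$. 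Furthermore, since $g(C)=1$, \pref{pr:stable_property} (2) allows me to interchange semi-stability and strong semi-stability throughout, so it suffices to contradict the plain semi-stability of $E$.

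So assume $E$ is semi-stable. By \pref{pr:stable_property} (\pref{it:symm_inherits_stability}) the symmetric power $S^{rm}(E)$ is (strongly) semi-stable, and tensoring it with the line bundle $(\wedge^r E)^{-m}$ preserves semi-stability. Using \pref{lm:symmetric_product} to compute degrees and recalling $\mu(S^{rm}(E))=rm\,\mu(E)$ together with $\deg\wedge^r E=\deg E$, the twisted bundle $W_m\coloneqq S^{rm}(E)\otimes(\wedge^r E)^{-m}$ has slope $rm\,\mu(E)-m\deg E=0$, because $\mu(E)=\deg E/r$. Hence $W_m$ is a semi-stable bundle of slope $0$ on the elliptic curve $C$.

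The decisive input, which I expect to be the main obstacle, is the estimate that \emph{on an elliptic curve every semi-stable bundle $V$ of slope $0$ satisfies $h^0(V)\le \rank V$}. I would prove it by passing to a Jordan--Hölder filtration of $V$: its graded pieces are stable of slope $0=0/1$ and therefore line bundles of degree $0$, each with $h^0\le 1$, so subadditivity of $h^0$ along the filtration gives the claim. Granting this, one obtains
\[
 h^0(X,-mK_X)=h^0(C,W_m)\le \rank S^{rm}(E)=\binom{rm+r-1}{r-1},
\]
which is a polynomial in $m$ of degree $r-1$. Thus $h^0(X,-mK_X)=O(m^{r-1})$ grows strictly more slowly than $m^{\dim X}=m^{r}$, so $-K_X$ cannot be big; in the surface case $r=2$ this reads $h^0(X,-mK_X)=O(m)$, in direct conflict with the defining inequality $h^0>cm^2$. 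The only genuinely delicate point is the slope-$0$ estimate, which relies on the classification of stable bundles on an elliptic curve. This is also precisely where the argument for $g\ge 2$ (\pref{pr:stability_g=2}) breaks down, since there a single section already forces instability via the term $2g-2>0$, whereas for $g=1$ that term vanishes and only the finer growth estimate survives.
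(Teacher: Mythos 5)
Your proposal is correct, and its skeleton is the same as the paper's: identify $H^0(X,\cO_X(-mK_X))$ with $H^0(C, S^{rm}(E)\otimes(\wedge^r E)^{-m})$, note that under the (strong) semi-stability assumption this bundle is semi-stable of slope $0$, and bound its $h^0$ by its rank $\binom{rm+r-1}{r-1}=O(m^{r-1})$ via a Jordan--H\"older filtration, which kills bigness. Two local differences are worth recording. First, to propagate the bound along the filtration, the paper degenerates each extension $0\to F_{i-1}\to F_i\to \gr_i\to 0$ to the split bundle $F_{i-1}\oplus\gr_i$ over an affine line and invokes upper semicontinuity; you instead use the subadditivity $h^0(F_i)\le h^0(F_{i-1})+h^0(\gr_i)$ coming from left exactness of global sections. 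Your step is simpler and entirely sufficient here; the degeneration machinery is genuinely needed elsewhere in the paper (in the proof of \pref{th:main_theorem}), but not for this estimate. Second, for the bound $h^0(\gr_i)\le 1$ on the stable graded pieces --- the point you flag as the only delicate one --- you invoke the classification of stable bundles on an elliptic curve (stable of slope $0$ implies a degree-$0$ line bundle). That is true, but it imports Atiyah's classification, whose validity you would need to cite in arbitrary characteristic. The paper gets the same bound from \pref{pr:hom} with no classification at all: a nonzero section is a nonzero map $\cO_C\to \gr_i$ between stable bundles of the same slope $0$, hence $h^0(\gr_i)=\hom(\cO_C,\gr_i)\le 1$. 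Substituting this one-line argument for your appeal to the classification removes the delicate point entirely and keeps the proof self-contained and characteristic-free; with that substitution your write-up is, if anything, cleaner than the paper's.
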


\begin{proof}
It is sufficient to show that $S^{rm}(E)$ is unstable for some $m$ by \pref{pr:stable_property}.
We will assume that $S^{rm}(E)$ is semi-stable for any $m$ and show
that $- K _{ X }$ is not big. 
Note that
$g(C) = 1 $ implies $\cO_C(K_C) \simeq \cO_C$,
and hence there exists the isomorphism
\[
 \pi _{ * } \cO _{ X } ( - m K_X )
 \simeq
 S^{rm}(E) \otimes L, 
\]
where $L = (\wedge ^r E)^{-m} $.
This implies that it is enough to show the convergence
\begin{align}
 \lim_{m \to \infty} \frac {h^0(C, S^{rm}(E) \otimes L) }{m^r} = 0
\end{align}
since $h^0(X, \cO(mK_X)) = h^0(C, \pi_* \cO(mK_X))$.

Let
\begin{align}
 0 \subset F_1 \subset  F_2 \subset \dots \subset F_{\ell} = S^{rm}(E) \otimes L
\end{align}
be a Jordan-H\"older filtration of $S^{rm}(E) \otimes L$,
so that for each $i$ there is an exact sequence 
\begin{align}\label{ex:T_i}
 0 \to F_{i-1} \to F_i \to  \gr _i \to 0 
 \end{align}
such that $ \gr _i$ is stable of slope $\mu( \gr _i) = \mu (S^{rm}(E) \otimes L)=0$. Note that
$
 \ell \le \rank (S^{rm}(E) \otimes L) = \binom{r-1+mr}{mr}
$.
Let $\xi \in \Ext^1( \gr _i,F_{i-1})$ be the extension class of \eqref{ex:T_i}, and consider the affine line
\begin{align}
 B \coloneqq \bfk \xi \subset \Ext ^{ 1 } \lb gr _{ i }, F _{ i - 1 } \rb.
\end{align}
Then we can construct the tautological vector bundle $\cE$ on $C \times B$ satisfying
\[
\cE |_{C \times \{t\} } \simeq
\begin{cases}
 F_i & t \neq 0 \in B\\
 F_{i-1} \oplus  \gr _i & t = 0 \in B
\end{cases}\]
(see \cite[Section (7.3)]{MR1428426}).
Let $p_2$ be the natural projection from $C \times B$ to B.
Then by taking a general $t \in B$, we obtain the following inequality
\[
 h^0(C, F_i)
 = h^0 (f^{-1}(t), \cE | _{ C \times \lc t \rc })
 \le h^0 ( f^{-1}(0), \cE | _{ C \times \lc 0 \rc } )
 = h^0 (C, F_{i-1}\oplus  \gr _i)
\]
by the upper semicontinuity theorem
(see \cite[Section 5 COROLLARY 1]{MR2514037}).
This inequality holds for all $i$, and we have
\begin{align}\label{eq:jj}
 h^0(C, S^{rm}(E) \otimes L) \le 
 h^0 \lb C, \bigoplus _{i=0} ^ \ell  \gr _i \rb
 = \sum _{i=0} ^ \ell h^0 (C,  \gr _i).
\end{align}
Since
$
 h^0 (C,  \gr _i) =\dim_\bfk \Hom (\cO _{ X },  \gr _i) \le 1
$ by \pref{pr:hom}, we obtain the inequality
\[
 \sum_{i=0} ^ \ell h^0 (C,  \gr _i) \le \binom{r-1+mr}{mr}.
\]
Thus we proved
$\displaystyle \lim_{m \to \infty} \frac {h^0(C, S^{rm}(E) \otimes L) }{m^r} = 0.$
\end{proof}

\begin{remark}
$E$ is in fact unstable because of \pref{pr:stable_property}.
\end{remark}

\begin{corollary}\label{cr:sss_implies_big}
$E$ is not strongly semi-stable if $-K_X$ is big.
\end{corollary}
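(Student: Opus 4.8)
The plan is to deduce the statement directly from the two propositions already established, by splitting into cases according to the genus $g = g(C)$. We work under the standing hypothesis $g \ge 1$ inherited from \pref{th:int2} (recall that \pref{cr:sss_implies_big} $=$ \pref{th:int2}, whose curve is as in \pref{th:int1}); this restriction is genuinely needed, since for $g = 0$ the curve is $\bP^1$ and the trivial bundle $\cO_{\bP^1}^{\oplus 2}$, whose projectivization is $\bP^1 \times \bP^1$ with ample $-K_X$, is strongly semi-stable.

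For $g = 1$ there is nothing to prove: \pref{pr:stability_g=1} asserts precisely that $E$ fails to be strongly semi-stable whenever $-K_X$ is big. For $g \ge 2$ the task is to verify the hypothesis of \pref{pr:stability_g=2}, namely to exhibit a positive integer $m$ with $H^0(X, \cO_X(-m K_X)) \neq 0$. This follows at once from bigness: as $X$ is smooth, $-K_X$ is Cartier, so $\lfloor m(-K_X) \rfloor = -m K_X$, and the defining inequality $h^0(\lfloor m(-K_X) \rfloor) > c m^2$ for $m \gg 0$ forces $H^0(X, \cO_X(-m K_X)) \neq 0$. Applying \pref{pr:stability_g=2} then yields the conclusion.

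The genuine difficulty has already been absorbed into the two propositions, so the only remaining work is this bookkeeping. The one point worth recording is the implicit assumption $g \ge 1$ together with the observation that bigness provides an honest nonzero pluri-anti-canonical section, rather than merely asymptotic growth of $h^0$, which is exactly what \pref{pr:stability_g=2} requires as input.
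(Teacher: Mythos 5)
Your proof is correct and follows the same route as the paper, which simply cites \pref{pr:stability_g=2} and \pref{pr:stability_g=1}; your case split on $g$ and the observation that bigness of the Cartier divisor $-K_X$ yields a nonzero section of some $\cO_X(-mK_X)$ are exactly the (implicit) bookkeeping behind the paper's one-line proof.
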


\begin{proof}
This follows from \pref{pr:stability_g=2} and \pref{pr:stability_g=1}.
\end{proof}

\section{Geometrically ruled surface with big anti-canonical line bundle}

In the rest of this paper, let $C$ be a smooth projective curve of genus
$
 g \ge 1
$
and $E$ be a vector bundle on $C$ of rank $r$ unless otherwise stated. Let
$
 \pi \colon X = \bP _{ C } \lb E \rb \to C
$
be the projective bundle associated to $E$.
In this section we give a criterion for bigness of
$
 - K _{ X }
$
in the case where $E$ is a direct sum of line bundles.

First we consider vector bundles which are isomorphic to direct sums of line bundles.


\begin{proposition}\label{pr:main_theorem_split_case}
Consider the direct sum
$E = \bigoplus _{i=1} ^{r} L_i $
of line bundles $L_i$ on $C$ satisfying
$
 \deg L _{ r } \le
 \deg L _{ r - 1 } \le
 \cdots \le
 \deg L _{ 1 }.
 $
 Then $-K_X$ is big if and only if 
\begin{align}\label{eq:higher_rank_condition}
 (r-1) \deg L_1 - \sum _{i=2} ^{r} \deg L_i > 2g-2.
\end{align}
\end{proposition}

\begin{proof}
Note first that
\begin{align*}
\begin{split}
 h^0 (X, \cO_X(-mK_X)) = h^0 ( C, \pi_* \cO_X(-mK_X) ) \\
 = h^0 ( C , S^{rm} (E) \otimes (\wedge^r E \otimes \cO_C (K_C))^ {-m})
 = \sum _{k \in A_m}  h^0 (C, \cL_k ),
\end{split}
\end{align*}
where
\begin{align}
\begin{split}
 A_m \coloneqq \lc k = (k_1, \dots ,k_r) \in \bZ ^{r} _ {\ge 0} \mid \sum  k_i = rm \rc,\\
 \cL_k \coloneqq L_1 ^{(k_1-m)} \otimes \dots \otimes L_r ^{(k_r - m)} \otimes \cO_C (-mK_C).
\end{split}
\end{align}

Assume \pref{eq:higher_rank_condition}.
We may assume $\deg L_i \ge 0 $ without loss generality by replacing $L_i$ with $L_i \otimes L_r$.
Take a sufficiently small rational number
$ 
\delta >0 
$
such that
\[
a \coloneqq  (1-\delta) (r-1) \deg L_1 - \sum _{i=2} ^{r} \deg L_i - (2g-2) > 0.
\]
If 
$
0 \le k_i \le \delta m
$
for all
$
i \ge 2
$,
then
$
 (r - (r-1) \delta) m \le k_1 \le rm
$
and hence
\begin{align*}
h^0 (\cL_k) = \sum_{i=1} ^{r} (k_i - m) \deg L_i -m (2g-2)+ 1 - g + h^1 (\cL_k) \\
\ge ( (r - (r-1) \delta) m - m ) \deg L_1  -m \sum_{i \ge 2} \deg L_i -m (2g-2) -g \\
\ge m \lb (1-\delta) (r-1) \deg L_1 - \sum _{i=2} ^{r} \deg L_i - (2g-2) \rb - g \\
\ge am -g.
\end{align*}
Since
$
\# \lc k \in A_m \mid 0 \le k_i \le \delta m \quad \forall i \ge 2 \rc \ge (\delta m - 1) ^ {r-1}
$,
we deduce 
\[
h^0 (X, \cO_X (-mK_X)) \ge (am-g) (\delta m-1) ^{r-1} = a s^{r-1} m^r + O (m^{r-1}).
\]
Hence $-K_X$ is big.

We next prove the converse.
%
Suppose that \pref{eq:higher_rank_condition} does not hold.
We may assume that
$
0 \le \deg L_1 
$
and
$
\deg L_i \le 0
$
for all
$
i \ge 2
$
without loss of generality.
Note that if $h^0(\cL_k)>0$ then
\begin{align}\label{eq:dd}
 h^1(\cL_k) = h^0(\cO_C(K_C) \otimes \cL_k ^{ - 1 }) \le h^0(\cO_C(K_C)) = g.
\end{align}
Hence for all $k \in A_m$, either
$
 h ^{ 0 } \lb D _{ i } \rb = 0
$
or

\begin{align*}
h^0 (\cL_k) = \sum_{i=1} ^{r} (k_i - m) \deg L_i -m (2g-2)+ 1 - g + h^1 (\cL_k) \\
\le (k_1 -m) \deg L_1 + \sum_{i=2} ^{r} (k_i - m) \deg L_i -m (2g-2) + 1 + h^1 (\cL_k) \\
\le (rm -m) \deg L_1 - m \sum_{i=2} ^{r} \deg L_i -m (2g-2) + 1 + h^0 (K_X) \\
\le m \lb (r-1) \deg L_1 - \sum _{i=2} ^{r} \deg L_i - (2g-2) \rb + 1 + h^0 (K_X) \\
\le 1 + h^0 (K_X).
\end{align*}

Then $-K_X$ is not big because
$
 \# A_m \le (rm)^{r-1}
$.
\end{proof}

Let $r =2$ and
consider an exact sequence of vector bundles on $C$ as follows,
where $L$ and $M$ are both line bundles.
\begin{align}\label{eq:extension}
0 \to L \to E \to M \to 0.
\end{align}
Below is the main theorem of this section.
\begin{theorem}\label{th:main_theorem}
Let
$
 E
$
be an unstable vector bundle of rank $2$ on $C$. 
Then $-K_X$ is big if and only if there are line bundles
$L$ and $M$ on $C$ such that 
$
 E \simeq L \oplus M
$
and
$
 \deg L- \deg M > 2 g - 2
$.
\end{theorem}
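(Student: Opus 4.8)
The plan is to reduce \pref{th:main_theorem} to the already-established split case \pref{cr:main_theorem_split_case} by showing that, under the bigness hypothesis, the unstable rank-$2$ bundle $E$ must in fact split as a direct sum of its Harder--Narasimhan factors. Since $E$ is unstable, its Harder--Narasimhan filtration is a genuine exact sequence of the form \eqref{eq:extension}, $0 \to L \to E \to M \to 0$, with $L$ the maximal destabilizing line subbundle and $M$ the quotient line bundle, so $\deg L > \deg M$. The ``if'' direction is immediate: if $E \simeq L \oplus M$ with $\deg L - \deg M > 2g-2$, then taking $L_1 = L$, $L_2 = M$ in \pref{cr:main_theorem_split_case} (with $r=2$) gives $\deg L_1 - \deg L_2 > 2g-2$, which is exactly \eqref{eq:higher_rank_condition 3}, so $-K_X$ is big.

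For the ``only if'' direction, the first step is to relate the bigness of $-K_X$ on $X = \bP_C(E)$ to that on the split model $\bP_C(L \oplus M)$. The standard tool is a degeneration: the extension class $\xi \in \Ext^1(M, L)$ spans an affine line $B = \bfk\xi$, over which one builds a tautological bundle $\cE$ on $C \times B$ with $\cE|_{C \times \{t\}} \simeq E$ for $t \neq 0$ and $\cE|_{C \times \{0\}} \simeq L \oplus M$, exactly as in the proof of \pref{pr:stability_g=1}. Applying $\bP_{C \times B}(\cE) \to B$ and the upper semicontinuity of $h^0$ of the relative anti-pluricanonical sheaves, I would compare $h^0(X, -mK_X)$ for the general fiber $E$ with $h^0(\bP_C(L\oplus M), -mK)$ for the special fiber $L \oplus M$. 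The direction of the semicontinuity inequality shows that bigness of $-K_X$ forces bigness of the anti-canonical divisor of the split surface $\bP_C(L \oplus M)$, whence by \pref{cr:main_theorem_split_case} one obtains $\deg L - \deg M > 2g-2$.

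The remaining and more delicate step is the \emph{a posteriori} splitting asserted in the introduction: having deduced $\deg L - \deg M > 2g-2$ from the numerical invariants, one must show that the extension \eqref{eq:extension} actually splits, i.e. $E \simeq L \oplus M$ rather than a nontrivial extension. The natural approach is to show that the extension class $\xi$ must vanish, for instance by arguing that a nontrivial extension would produce \emph{strictly fewer} sections than the split bundle, contradicting the bigness already secured for $E$. Concretely, I would compute $\pi_* \cO_X(-mK_X) \simeq S^{2m}(E) \otimes (\wedge^2 E \otimes \cO_C(K_C))^{-m}$ and analyze how the symmetric powers $S^{2m}(E)$ of a nonsplit extension fail to acquire the full collection of line-bundle summands $\cL_k$ that drive the $m^2$-growth in the proof of \pref{pr:AA}; the semicontinuity comparison already shows the nonsplit $E$ can have at most as many sections as the split model, so if $-K_X$ is big for $E$ it must also be big for $L \oplus M$, and then one must rule out a strict loss of leading-order growth under a nontrivial extension.

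The main obstacle I anticipate is precisely this last vanishing of the extension class. The semicontinuity argument only gives the inequality $h^0(E) \le h^0(L \oplus M)$ at each level $m$, which suffices to transport bigness \emph{from} $E$ \emph{to} the split model but does \emph{not} by itself force $\xi = 0$; one needs an independent reason why a nonsplit extension with these slopes cannot retain big anti-canonical divisor. I expect this to require a careful cohomological estimate comparing $h^0(C, S^{2m}(E) \otimes L')$ for nonsplit $E$ against the split case, perhaps exploiting that a nontrivial extension makes certain would-be line-bundle quotients of $S^{2m}(E)$ fail to split off, thereby killing the $O(m^2)$ term. Verifying that this loss is genuinely of leading order, and not merely of lower order, is the technical heart of the argument.
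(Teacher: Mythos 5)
Your overall architecture is the same as the paper's: the ``if'' direction via \pref{cr:main_theorem_split_case}, and the ``only if'' direction via the degeneration of $E$ to the direct sum of its Harder--Narasimhan factors over $B = \bfk\xi$, upper semicontinuity of $h^0$ for the relative anti-pluricanonical sheaves, and then \pref{cr:main_theorem_split_case} applied to the special fiber to get $\deg L - \deg M > 2g-2$. Up to that point your argument matches the paper's proof step for step and is correct.

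The gap is in the last step, which you flag as ``the technical heart'' and leave unresolved. You propose to show $\xi = 0$ by a growth-of-sections comparison: arguing that a nontrivial extension would lose the leading $O(m^2)$ term in $h^0(C, S^{2m}(E)\otimes L')$ relative to the split model. No such estimate is needed, and the approach as described cannot be completed in the form you give it, because you would be trying to extract a contradiction from objects that do not exist: once $\deg L - \deg M > 2g-2$ is established, Serre duality gives
\begin{align*}
 \ext^1_C(M, L) = h^1\lb C, L \otimes M^{-1}\rb = h^0\lb C, \cO_C(K_C)\otimes L^{-1}\otimes M \rb = 0,
\end{align*}
since $\deg\lb \cO_C(K_C)\otimes L^{-1}\otimes M\rb = 2g-2 - (\deg L - \deg M) < 0$. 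So \emph{every} extension of $M$ by $L$ with these degrees splits; in particular $\xi = 0$ automatically and $E \simeq L \oplus M$. This one-line vanishing is exactly how the paper concludes, and it is the missing idea in your proposal: the splitting is not a delicate a posteriori fact requiring a cohomological comparison of symmetric powers, but an immediate consequence of the numerical inequality you have already proved.
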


\begin{proof}
The 'if' direction is already shown in \pref{pr:main_theorem_split_case}.
To show the 'only if' direction, assume that $-K_X$ is big. We then obtain an exact sequence \eqref{eq:extension} 
such that $\deg L > \deg M$ as the Harder-Narasimhan filtration of $E$.
Let $\xi \in \Ext^1(M,L)$ be the corresponding extension class and set
$
 B \coloneqq \bfk \xi \simeq \bA ^1
$.
Then we have a rank $2$ vector bundle $\cE$ on $C \times B$ such that
\begin{align}
\cE |_{C \times \{t\} } \simeq
\begin{cases}
 E & ( t \neq 0 )\\
 L \oplus M & ( t = 0 ).
\end{cases}
\end{align}
Consider the following diagram.
\begin{align}
 \xymatrix{
    \cX:=\bP_{C \times B}(\cE) \ar[r]^-{\pi} \ar[dr]_{f} & C \times B \ar[d]^{p_2} \\
    &  B 
 }
\end{align}
Note that
$
 \cX _{ 0 } \simeq \bP_C( L \oplus M)
$
and
$
 \cX _{ t } \simeq \bP_C(E)\ (t \neq 0)
$.
Furthermore we have
\[
 \cO_\cX(K_{\cX})|_{ \cX _{ t } }=\cO_\cX(K_{ \cX _{ t } })\ \ (t \in B),
\]
where $\cO_\cX (K_{\cX})$ is the canonical sheaf of $\cX$.
By using the upper semicontinuity theorem \cite[2.5 COROLLARY 1]{MR2514037} we obtain the inequality
\begin{align*}
 h^0( \cX _{ t }, \cO_\cX( m K_{\cX})| _{ \cX _{ t } } )
 \le
 h^0 ( \cX _{ 0 }, \cO_\cX( m K_{\cX}) | _{ \cX _{ 0 } }) 
\end{align*}
for any $ m > 0 $ and general $t \in B$. Hence we obtain the inequality
\begin{align}
 h ^{ 0 } \lb X, \cO _{ X }(- m K _{ X }) \rb
 \le
 h ^{ 0 } \lb \bP _{ C } \lb L \oplus M \rb, \cO_{ \bP _{ C } \lb L \oplus M \rb }( - m K _{ \bP _{ C } \lb L \oplus M \rb }) \rb
\end{align}
for any
$
 m > 0
$, so that the bigness of $-K_{X}$ implies that of
$
 - K _{\bP_C (L \oplus M )}
$.
Thus we obtain the inequality
$
 \deg L - \deg M > 2 g - 2
$
from \pref{pr:main_theorem_split_case}.
This in fact implies that \eqref{eq:extension} is a trivial extension, since
\begin{align}
 \ext _{ C } ^{ 1 } \lb M, L \rb
 = h^1 \lb C, L \otimes M ^{ - 1 } \rb
 = h^0 \lb C, \cO _{ C } ( K _C ) \otimes L^{-1} \otimes M \rb = 0.
\end{align}
For the last equality, use
$
 \deg \lb \cO _{ C } ( K _C ) \otimes L^{-1} \otimes M \rb < 0
$.
Hence
$
 E \simeq L \oplus M
$,
concluding the proof.
\end{proof}

\begin{remark}\label{rm:nakayama's_results}
In fact \pref{th:main_theorem} and \pref{cr:sss_implies_big} are special cases of \cite[Chapter IV. 3.7. Lemma]{MR2104208}.
Let
$
\pi \colon X \coloneqq \bP _{ C } (E) \to C
$ 
be the projective bundle. Consider the Harder-Narasimhan filtration of $E$
\begin{align}
0 \subset E_1 \subset E_2 \subset \dots \subset E_k = E,
\end{align}
and assume that the successive quotients
$
 \gr _i \coloneqq E_i / E_{i-1}
$
are strongly semi-stable.
By combining \cite[Chapter IV. 3.7. Lemma]{MR2104208} with the well-known fact that the big cone is the interior of the pseudo-effective cone, we see that
the line bundle $\cO_{X} (m) \otimes \pi^* D$ on $X$,
where
$
m \in \bZ
$
and
$D$ is a line bundle on $C$, is big if and only if
\begin{align}\label{eq:big_slope}
 \deg D < m \mu ( \gr _1).
\end{align}

This implies that if $E$ is strongly semi-stable, $-K_X$ cannot be big as long as $g(C) \ge 1$.
Under the assumption of \pref{th:main_theorem}, it follows that
$-K_X$ is big
if and only if
$
 \deg L - \deg M > 2 g - 2
$,
since $L =  \gr _1$ and $M =  \gr _2$.

The proof of \cite[Chapter IV. 3.7. Lemma]{MR2104208}, in turn, is based on a numerical criterion for semi-stability \cite[Theorem 3.1]{MR946247}. Compared to this, our arguments above are more elementary.
\end{remark}

\begin{remark}
If the rank of an unstable vector bundle $E$ is at least three,
the bigness of $-K_X$ does not necessarily imply that
$E$ is isomorphic to a direct sum of semi-stable vector bundles.
To give such an example, let $g \ge 2$,
$L$ be a line bundle of degree $g$ on $C$ such that $h^0 (C, \omega_C \otimes L^{-1}) >0 $,
and
$E' \coloneqq \cO_C \oplus \cO_C $.
Note that $E'$ is semi-stable.
Let $E$ be the extension of
$E'$ by $L$ given by a non-trivial element of
\begin{align*}
\Ext ^1 (E', L) 
\simeq  H^1 (C, ( E ' ) ^{ - 1 } \otimes L) \\
\simeq H^0 (C, \omega_C \otimes ( ( E ' ) ^{ - 1 } \otimes L)^{-1}) ^{ \vee }\\
\simeq H^0 (C, \omega_C \otimes ((\cO_C \oplus \cO_C) \otimes L)^{-1} ) ^{ \vee } \\
\simeq H^0 (C, \omega_C \otimes L^{-1}) ^{ \vee } \oplus H^0 (C, \omega_C \otimes L^{-1}) ^{ \vee } \neq 0.
\end{align*}
By the same arguments in \pref{rm:nakayama's_results}, we see that $ -K_{\bP_C(E)} $ is big. To see that $E$ does not admit a decomposition into semi-stable vector bundles, one can use the uniqueness of the Harder-Narasimhan filtration.
Note that if $g = 1$, there are no such examples (see \cite[Theorem 10]{MR2264108}).
\end{remark}

\begin{corollary}\label{cr:structure_of_Frobenius_pullback}
Let
$
 E
$
be a vector bundle of rank $2$ on $C$.
If
$
 - K _{ X }
$
is big, there exists an integer
$
 e \ge 0
$
such that the Frobenius pull-back
$
 \lb F ^{ e } \rb ^{ * } E
$
is isomorphic to a direct sum of line bundles
$
 L ' \oplus M '
$
such that
$
 \deg L ' - \deg M ' > 2 g - 2
$.
\end{corollary}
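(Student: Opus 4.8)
The plan is to reduce to \pref{th:main_theorem} after replacing $E$ by a suitable Frobenius pull-back. Since $-K_X$ is big, \pref{cr:sss_implies_big} shows that $E$ is not strongly semi-stable; hence by \pref{Def:stab} there is an integer $e \ge 0$ such that $E' \coloneqq (F^e)^* E$ is unstable (when $\chara ( \bfk ) = 0$ one simply takes $e = 0$). Put $X' \coloneqq \bP_C(E')$. It then suffices to prove that $-K_{X'}$ is big, because \pref{th:main_theorem} applied to the unstable rank $2$ bundle $E'$ would furnish a splitting $E' \simeq L' \oplus M'$ with $\deg L' - \deg M' > 2g-2$, which is the desired conclusion.

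To compare the two anti-canonical classes I would exploit the standard identification $\bP_C((F^e)^* E) \simeq X \times_{C, F^e} C$. The first projection is a finite surjective morphism $\phi \colon X' \to X$, being the base change of the finite surjective map $F^e$ along $\pi$, and it sits in a commutative square together with $\pi' \colon X' \to C$ and $F^e \colon C \to C$. Since $\cO(1)$ and exterior powers commute with base change, we have $\phi^* \cO_X(1) \simeq \cO_{X'}(1)$ and $\wedge^2 E' \simeq (F^e)^* \wedge^2 E$. Feeding this into the canonical bundle formula $\cO_X(K_X) \simeq \cO_X(-2) \otimes \pi^*(\cO_C(K_C) \otimes \wedge^2 E)$ recorded in the proof of \pref{pr:stability_g=2}, a short computation gives
\begin{align}
 -K_{X'} \sim \phi^*(-K_X) + (\pi')^{*}\lb (F^e)^* K_C - K_C \rb.
\end{align}

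Now $\phi^*(-K_X)$ is big, since bigness is preserved under pull-back along a finite surjective morphism, whereas the second summand is the pull-back of $(F^e)^* K_C - K_C$, a divisor on $C$ of degree $(p^e - 1)(2g-2)$ with $p = \chara ( \bfk )$. Because $g \ge 1$ this degree is non-negative, so the divisor is nef on $C$ and its pull-back by $\pi'$ is nef on $X'$. As the sum of a big divisor and a nef divisor is big, we conclude that $-K_{X'}$ is big, and the reduction is complete.

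I expect the delicate point to be precisely this comparison of $-K_{X'}$ with $\phi^*(-K_X)$: one must verify that the discrepancy is exactly the pull-back from $C$ of $(F^e)^* K_C - K_C$, and that this class is nef rather than merely of non-negative degree, so that the principle big $+$ nef $=$ big applies. The hypothesis $g \ge 1$ enters exactly here to ensure $(p^e-1)(2g-2) \ge 0$; when $g = 1$ the correction term is numerically trivial and $-K_{X'}$ is simply the pull-back of the big divisor $-K_X$.
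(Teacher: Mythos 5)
Your proof is correct, and its skeleton is the same as the paper's: invoke \pref{cr:sss_implies_big} to get $e \ge 0$ with $E' \coloneqq (F^e)^*E$ unstable, transfer the bigness of $-K_X$ to $-K_{X'}$ for $X' \coloneqq \bP_C(E')$, and finish by applying \pref{th:main_theorem} to $E'$. Where you genuinely differ is the implementation of the transfer step. The paper never introduces the finite morphism $\phi \colon X' \to X$; it works entirely on $C$, tensoring the injection $\cO_C \hookrightarrow f_*\cO_C$ (where $f = F^e$) with $\pi_*\cO_X(-mK_X)$ to obtain $h^0(\pi_*\cO_X(-mK_X)) \le h^0(f^*\pi_*\cO_X(-mK_X))$, and then using the identity $\pi'_*\cO_{X'}(-mK_{X'}) \simeq f^*(\pi_*\cO_X(-mK_X)) \otimes (f^*\omega_C \otimes \omega_C^{-1})^{\otimes m}$ together with a non-zero section of $(f^*\omega_C \otimes \omega_C^{-1})^{\otimes m}$ to obtain $h^0(f^*\pi_*\cO_X(-mK_X)) \le h^0(\cO_{X'}(-mK_{X'}))$. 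Your divisorial identity $-K_{X'} \sim \phi^*(-K_X) + (\pi')^*\left((F^e)^*K_C - K_C\right)$ is exactly the upstairs version of that push-forward formula; your appeal to bigness being preserved under finite surjective pull-back plays the role of the injection $\cO_C \hookrightarrow f_*\cO_C$, and big-plus-nef-equals-big replaces the paper's choice of a section of the discrepancy line bundle. The trade-off: you lean on two standard positivity facts as black boxes, while the paper stays at the level of elementary $h^0$-estimates for sheaves on the curve, in keeping with its style elsewhere. In one respect your formulation is more robust: the paper asserts $\deg(f^*\omega_C \otimes \omega_C^{-1}) > 0$, which fails when $g = 1$ (the degree is $(p^e - 1)(2g - 2) = 0$), although its argument survives because $\omega_C \simeq \cO_C$ makes the twist trivial there; your observation that non-negative degree suffices (the correction term being nef, indeed numerically trivial when $g=1$) handles all cases uniformly.
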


\begin{proof}
By \pref{cr:sss_implies_big}, we see that $E$ is not strongly
semi-stable. Hence there exists a positive integer
$
 e \ge 1
$
such that the Frobenius pull-back
$
 E ' \coloneqq \lb F ^{ e } \rb ^{ * } E
$
is unstable. Set
$
 f \coloneqq F ^{ e }
$,
$
 Y ' \coloneqq \bP_C ( E ' )
$, and name the morphisms as in \pref{fg:first_diagram} below.

By applying the exact functor
$- \otimes _{ C } \pi_* \cO_Y (-m K_Y)$
to the injective homomorphism
$
 \cO_C \hookrightarrow f_* \cO_C
$
and then taking $H ^{ 0 }$, we obtain the inequality
\begin{align}
 h^0(\pi_* \cO_Y (-m K_Y)) \le h^0(f^*( \pi_* \cO_Y (-m K_Y))).
\end{align}
On the other hand, for each integer $m > 0$ we have 
\begin{align}
 \pi'_* ( \cO_{Y'}(- m K _{ Y ' }) ) =  f^*( \pi_*\cO_{Y'} (-m K_Y)) \otimes
 (f^* \omega_C \otimes \omega_C^{-1}) ^{ \otimes m }.
\end{align}
Since
$
 \deg (f^* \omega_C \otimes \omega_C^{-1}) > 0
$,
we have
$
 H ^{ 0 } \lb \lb f^* \omega_C \otimes \omega_C^{-1} \rb ^{ \otimes m }\rb
 \neq 0
$
for any sufficiently large $m$. By the similar arguments as above,
we obtain the following inequality.
\begin{align}
 h^0(f^*( \pi_* \cO_Y (-m K_Y))) \le h^0( \cO_{Y'}(- m K _{ Y ' } ))
\end{align}
Thus we see that $- K _{ Y ' }$ is big, and the claim now follows from \pref{th:main_theorem}.
\end{proof}


%


\section{Numerically trivial line bundles on normal surface with nef and big anti-canonical bundle}
\label{sc:Numerically trivial line bundles on normal surface with nef and big anti-canonical bundle}

In this section we prove that a numerically trivial line bundle on a
normal projective surface with nef and big anti-canonical divisor
is trivial. This is a result which is proved for some partial cases in
the preceding works \cite{MR3319838, MR3383599}.
In fact we give two different proofs.
In the first subsection, we give a proof as an application of our results
obtained so far. The point is that the minimal model
(in the classical sense) of the minimal resolution has big anti-canonical
line bundle, so that we can apply our results to it.
In the second subsection, we give a proof by combining known results.

%
%
%
\subsection{Proof via \pref{cr:structure_of_Frobenius_pullback}}

\begin{theorem}\label{th:adjunction}
Let $X$ be a normal projective surface 
such that $-(K_X+\Delta)$ is a nef and big \bR-Cartier divisor,
where $\Delta$ is an \bR-divisor satisfying $\lfloor \Delta \rfloor = 0$.
Then there exists a positive integer $m > 0$ which depends only on $X$ such that
any numerically trivial line bundle $\cL$ on $X$ satisfies
$
 \cL ^{ \otimes m } \simeq \cO _{ X }
$.
\end{theorem}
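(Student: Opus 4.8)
The plan is to reduce the computation of numerically trivial line bundles on the possibly singular surface $X$ to the geometry of a geometrically ruled surface, to which \pref{cr:structure_of_Frobenius_pullback} applies. Let $f \colon Y \to X$ be the minimal resolution. Since $\lfloor \Delta \rfloor = 0$ forces $\Delta \ge 0$, the class $-K_X = -(K_X + \Delta) + \Delta$ is big, and because $f$ is the minimal resolution we may write $-K_Y = f^{*}(-K_X) + \sum_i b_i E_i$ with $b_i \ge 0$, so $-K_Y$ is big as well. The pull-back $f^{*} \colon \Pic(X) \hookrightarrow \Pic(Y)$ is injective (as $f_{*}\cO_Y = \cO_X$) and carries numerically trivial classes to numerically trivial classes, so it suffices to bound the order of the classes in the image of $\Pic^{\tau}(X)$. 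I would then run the classical minimal model program $g \colon Y \to X_{\min}$; since $h^{0}(-mK)$ does not decrease under the blow-downs, $-K_{X_{\min}}$ is again big, and as $\kappa(Y) = -\infty$ the surface $X_{\min}$ is either rational or a geometrically ruled surface $\pi \colon X_{\min} = \bP_{C}(E) \to C$ over a curve $C$ of genus $g \ge 1$.

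If $X_{\min}$ is rational, then $\Pic^{0}(Y) = \Pic^{0}(X_{\min}) = 0$ and $\Pic^{\tau}(Y)$ is finite, so the image of $\Pic^{\tau}(X)$ is finite and we are done. So assume $X_{\min} = \bP_{C}(E)$ with $g \ge 1$. Here \pref{cr:structure_of_Frobenius_pullback}, applied to $E$ (after a Frobenius twist in positive characteristic), pins down the structure of $X_{\min}$: the bundle becomes a direct sum of line bundles with a large degree gap, so $X_{\min}$ carries a section $C_{0} \cong C$ whose self-intersection is very negative and along which $-K_{X_{\min}}$ is negative. Writing $h \coloneqq \pi \circ g \colon Y \to C$ and using that blow-ups do not alter $\Pic^{0}$, we get $\Pic^{0}(Y) = h^{*}\Pic^{0}(C)$; thus any numerically trivial $\cL$ on $X$ satisfies $f^{*}\cL = h^{*}N$ for a unique $N \in \Pic^{0}(C)$.

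The heart of the argument is a descent constraint on $N$. I would first argue that $X$ must possess a non-rational singularity: if all singularities of $X$ were rational then $X$ would be $\bQ$-factorial and, by the log del Pezzo theory recalled in the introduction (cf.\ \cite{MR3383599}), rational, contradicting $g \ge 1$. A non-rational singularity produces an exceptional curve $Z \subset Y$ of positive arithmetic genus. Since the fibers of $h$ are trees of rational curves, $Z$ cannot be contracted by $h$ and therefore dominates $C$. Because $Z$ is contracted to a point by $f$, the restriction $f^{*}\cL|_{Z}$ is trivial, which under $f^{*}\cL = h^{*}N$ reads $(h|_{Z})^{*}N = \cO_{Z}$. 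Passing to the normalization $\widetilde Z \to Z \to C$, this places $N$ in the kernel of $\Pic^{0}(C) \to \Pic^{0}(\widetilde Z)$, a finite group since $\widetilde Z \to C$ is a non-constant morphism of smooth curves. Hence the image of $\Pic^{0}(X)$ in $\Pic^{0}(Y)$ is finite.

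Since $\Pic^{\tau}/\Pic^{0}$ is always finite, it follows that $\Pic^{\tau}(X)$ is a finite group, and taking $m$ to be its exponent gives $\cL^{\otimes m} \simeq \cO_{X}$ for every numerically trivial $\cL$, with $m$ depending only on $X$. In positive characteristic one should note that a connected group scheme whose reduction is a point has no non-trivial $\bfk$-points, so the possible non-reducedness of the Picard scheme is harmless. The step I expect to be the main obstacle is the interface between the singular surface and its ruled minimal model: showing that the very negative section forces a non-rational singularity whose exceptional curve dominates $C$ — equivalently, excluding an irrational ruled minimal model coexisting with only rational singularities — together with the bookkeeping needed to transport bigness and $\Pic^{0}$ faithfully through both $f$ and the minimal model program.
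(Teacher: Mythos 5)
Your proposal takes a genuinely different route from the paper's proof of \pref{th:adjunction}, and one of its key steps is justified incorrectly. The paper's argument is self-contained and uses \pref{cr:structure_of_Frobenius_pullback} in an essential way: the splitting $(F^e)^*E \simeq L' \oplus M'$ gives a section of $\bP_C((F^e)^*E)$, whose image $\Cbar$ in $Y$ is a horizontal curve with $(\Cbar. K_Y) > 0$; the nefness of $-(K_X+\Delta)$ then forces the strict transform $\Ctilde \subset \Xtilde$ to be contracted by the minimal resolution $\varphi$, and restricting a numerically trivial $\cL$ to $\Ctilde$ and pulling back to $C$ yields $\cL_C^{\otimes \deg F^e} \simeq \cO_C$, i.e.\ the explicit bound $m = p^e$ (and $m=1$ in characteristic zero). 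You instead reproduce the dichotomy of the paper's second proof (\pref{sc:combination}): rational singularities are excluded by Tanaka's rationality theorem (\pref{pr:rational}, \cite{MR3383599}), and the resulting non-rational singularity is exploited geometrically, where the paper's second proof cites \cite[Theorem 2.2]{MR1821186}. Two consequences of this choice: the nefness hypothesis enters your proof only through Tanaka's theorem, and \pref{cr:structure_of_Frobenius_pullback} becomes decorative --- indeed the structure you claim to extract from it, a section $C_0 \cong C$ of $X_{\min}$ with very negative self-intersection, does not exist in positive characteristic (the bundle splits only after Frobenius pull-back, so one only obtains a multisection, namely the image of a section of $\bP_C((F^e)^*E)$); fortunately you never use it.

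The genuine gap is the step producing a horizontal exceptional curve from the non-rational singularity. You argue that a non-rational singularity yields an exceptional curve $Z$ of positive arithmetic genus, and that $Z$ cannot be vertical because fibers of $h$ are trees of rational curves. Neither implication is correct as stated: a non-rational surface singularity need not have any irreducible exceptional curve of positive arithmetic genus, and a negative definite tree of smooth rational curves \emph{can} support an effective cycle of positive arithmetic genus. For example, if $C_0$ is a $(-2)$-curve meeting four pairwise disjoint $(-3)$-curves $C_1,\dots,C_4$ transversally, the configuration is negative definite and is a tree of smooth rational curves, yet $Z = 2C_0+C_1+C_2+C_3+C_4$ has $p_a(Z)=1$, so by Artin's criterion its contraction is a non-rational singularity. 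Thus ``tree of rational curves'' is not what keeps non-rational singularities out of the fibers of $h$. What your argument needs is the finer fact that every effective cycle $D$ supported on fibers of a $\bP^1$-fibration satisfies $h^1(\cO_D)=0$ (for instance: $\cO_Y(-F)|_{nF} \simeq \cO_{nF}$ because $F$ is a full fiber, so $h^1(\cO_{nF})=0$ by induction on $n$, and $h^1(\cO_D) \le h^1(\cO_{nF})$ for $D \le nF$); by Artin's criterion any contraction of vertical curves is then rational, so the connected exceptional locus over the non-rational point must contain an irreducible component dominating $C$, and your norm argument applies to that component. A second, smaller point: writing $f^*\cL = h^*N$ with $N \in \Pic^0(C)$ uses $\Pic^{\tau}(Y) = h^*\Pic^0(C)$, which is true (intersect with a fiber, with the exceptional curves of $Y \to X_{\min}$, and with a multisection) but should be stated. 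With these repairs your argument is correct and gives $m$ equal to the exponent of the finite kernel of $\Pic^0(C) \to \Pic^0(\widetilde{Z_1})$, though it is weaker in spirit than the paper's proof of this theorem, which avoids any appeal to \cite{MR3383599} or \cite{MR1821186}.
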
  

\begin{proof}
If
$
 X
$
is rational, the assertion is obvious. Suppose otherwise.
Let $\varphi \colon {\Xtilde} \to X$ be the minimal resolution of $X$ and set
$\Delta' \coloneqq \varphi^{-1}_* \Delta$.
Then we have the canonical bundle formula as follows.
\begin{align}\label{eq:22}
 K_{\Xtilde}+\Delta' \equiv \varphi^* (K_X + \Delta) + \sum a_i E_i
\end{align}
Since $ a_i \le 0$ (see \cite[Corollary $4.3$]{MR1658959}) and
$- ( K_X + \Delta )$ is big, so is $-K_{\Xtilde}$.

\begin{figure}
\begin{align}
 \xymatrix{
   & {\Xtilde} \ar[r]^{\varphi} \ar[d]^{\varepsilon} &  X\\
 Y' \ar[r]^{f'}\ar[d]^{\pi'}  &  Y\ar[d]^{\pi} \\
 C\ar[r]^{f}  &  C
  }
\end{align}
\caption{\label{fg:first_diagram}}
\end{figure}

\begin{figure}
\begin{align}
 \xymatrix{
 & {\Xtilde}\ar[r]^{\varphi} & X\\
(\Ctilde)_{\norm}\ar[r]^{\psi} 
& \Ctilde\ar[r]^{\varphi} \ar@{^{(}->}[u]\ar[d]^{\varepsilon}  &\{pt\} \ar@{^{(}->}[u]  \\
s(C)\ar[u]^{\alpha} \ar@<0.5ex>[d]^-{\pi'} \ar[r]^{f'} & 
\Cbar \ar[d]^{\pi}\\
C \ar@<0.5ex>[u]^-s  \ar[r]^{f} & C
}
\end{align}
\caption{\label{fg:main_diagram}}
\end{figure}

Let
$
 \varepsilon \colon \Xtilde \to Y
$
be a composition of contractions of $(- 1)$-curves such that
$Y$ is a geometrically ruled surface $\bP_C(E)$ with
$
 g ( C ) \ge 1
$
(recall that $X$ is assumed to be non-rational).
Consider the canonical bundle formula
\begin{align}\label{eq:11}
K _{ \Xtilde } = \varepsilon^*  K_Y + F
\end{align}
with respect to $\varepsilon$, where $F$ is an effective
$\varepsilon$-exceptional divisor on $\Xtilde$.
Since $-K_{\Xtilde}$ is big, so is $-K_Y$.
By \pref{pr:main_theorem_split_case}, the pull-back
$
 E ' \coloneqq f ^{ * } E
$,
where
$
 f = F ^{ e }
$
is some iteration of the Frobenius morphism, is isomorphic to
a direct sum of line bundles
$
 L ' \oplus M '
$
such that
$
 \deg L ' - \deg M ' > 2 g - 2
$.

Let $s$ be the section of $\pi'$ corresponding to the natural projection $E' \to M'$.
Let
$
 \Cbar \subset Y
$
be the closed subscheme
$
 f ( s ( C ) ) \subset Y
$
equipped with the reduced structure, and
$\Ctilde$ be the strict transform of $\Cbar$ in ${\Xtilde}$.
These varieties and morphisms form the diagram
\pref{fg:main_diagram}.

\begin{claim}\label{cl:positivity}
$(\Ctilde. K_{\Xtilde}+\Delta ') > 0$.
\end{claim}
\begin{proof}
It is easy to see that
$
 f' _*[s(C)] = \lb \deg f'| _s(C) \rb [\Cbar]
$
and
$(\cO _{ Y ' }(1).s(C)) = \deg M'$. Hence
\begin{align}
\begin{aligned}
 \lb \deg f' | _C \rb (\Cbar. K_Y) = s(C).f'^*K_{Y} \\
= s(C).f'^* \cO _{ Y } (-2) + s(C). f'^* \pi^* (\wedge ^2 E \otimes\ \omega_{C}) \\
= s(C).\cO _{ Y ' }(-2) + s(C).\pi'^* (\wedge ^2 E' \otimes f^* \omega_{C})\\
= \deg L' - \deg M' + \lb \deg f \rb (2g-2) > 0,
\end{aligned}
\end{align}
so that
$
 (\Cbar. K_Y) > 0
$.
If $\Ctilde ^2\ge 0$, it follows from this inequality and \pref{eq:11}
that
$(\Ctilde. K_{\Xtilde}+\Delta') > 0$.
Now suppose $\Ctilde ^2<0$. Write
\[
 \Delta' = \alpha \Ctilde + R,
\]
where $0 \le \alpha <1$.
Then since $g(\Ctilde) \ge 1 $ and $\Ctilde ^2 < 0$, we have 
\begin{align}
\begin{aligned}\label{neq:intersection}
(\Ctilde. K_{\Xtilde}+\Delta) = (\Ctilde^2) + (\Ctilde.K_{\Xtilde}) + (\alpha - 1)(\Ctilde ^2) + (\Ctilde.R) \\
=2g(\Ctilde)-2 + (\alpha-1)(\Ctilde^2) + (\Ctilde.R) >0.
\end{aligned}
\end{align}
\end{proof}

Consider the following equality obtained from \pref{eq:22}.
\[ 
(\Ctilde.K_{\Xtilde}+\Delta') = (\varphi_*(\Ctilde).K_X+\Delta) + \sum a_i(E_i. \Ctilde )
\]
Since $- (K _{ X }+\Delta)$ is nef, $(\varphi_*(\Ctilde).K_X+\Delta) \le 0$.
If $\Ctilde$ is not contracted by $\varphi$, then the second term of the right hand side is $\le 0$ since $a_i \le 0$ and $\Ctilde \neq E _{ i }$ for all $i$.
This contradicts \pref{cl:positivity}.
Hence $\Ctilde$ must be contracted to a point by $\varphi$.

Now we prove the theorem. Let $\cL$ be a numerically trivial line bundle on $X$.
There exists a numerically trivial line bundle $\cL_Y$ on $Y$ such that
$
 f ^* \cL = \varepsilon ^* \cL_Y
$.
Since
$
 \Pic  \bP _{ C } (E)  = \pi ^{ * } \Pic C \perp \bZ \cO ( 1 )
$,
there exists a numerically trivial line bundle
$\cL_C$ on $C$ such that $\cL_Y = \pi ^* \cL_C.$
It follows from \pref{fg:main_diagram} that
$
 (\varepsilon ^* \pi^* \cL_C) |_{\Ctilde} = \varphi^* \cL|_{\Ctilde} = 0
$.
Now let \[\psi: (\Ctilde)_{\norm} \to \Ctilde \] 
be the normalization of $\Ctilde$. 
Note that \[\varepsilon \circ \psi :(\Ctilde)_{\norm} \to \Ctilde \to \Cbar \]
is a normalization of $\Cbar$ as well.
These morphisms form the diagram \pref{fg:main_diagram}, where
$\alpha$ is the natural morphism induced by the universal property of the normalization. Then we have 
\begin{align}
 \cO _{ C } \simeq s^* \alpha^* \psi^* ((\varepsilon ^* \pi^*) \cL_C|_{\Ctilde})
 \simeq s^* f'^* \pi^* \cL_C
 \simeq f^* \cL_C \simeq \cL_C ^{ \otimes \deg f }.
\end{align}
Therefore we can take
$
 m \coloneqq \deg f
$
when
$
 \chara \lb \bfk \rb > 0
$,
and
$
 m \coloneqq 1
$
otherwise.
\end{proof}

We will use \cite[Lemma $2.1$]{MR1821186} in the proof of \pref{th:adj} below.

\begin{lemma}($=$\cite[Lemma 2.1]{MR1821186})\label{lm:freeness}
Let $X$ be a normal surface with irregularity
$
q(X) \coloneqq \dim \Pic^0 (X) = 0
$.
If no multiple $nK_Z$ with $n>0$ is effective, then $\Pic(Z)$ is a free abelian group of finite rank.
\end{lemma}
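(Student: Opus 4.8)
The plan is to reduce the statement to the torsion-freeness of a finitely generated group and then to kill torsion by a cohomological computation. First I would record that, since $q(Z)=\dim\Pic^0(Z)=0$, the identity component $\Pic^0(Z)$ of the Picard scheme is a finite group scheme, so that $\Pic(Z)$ is an extension of the N\'eron--Severi group $\operatorname{NS}(Z)$ by the finite group $\Pic^0(Z)(\bfk)$. As $\operatorname{NS}(Z)$ is finitely generated by the theorem of the base, $\Pic(Z)$ is a finitely generated abelian group, and such a group is free of finite rank precisely when it is torsion-free. Thus it suffices to show that every torsion line bundle $\cL$ on $Z$ is trivial.

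The key reduction is that a torsion line bundle is numerically trivial, so $\cL\cdot H=0$ for an ample divisor $H$; consequently, if $\cL$ admitted a nonzero section, the associated effective divisor $D$ would satisfy $D\cdot H=0$, forcing $D=0$ and $\cL\simeq\cO_Z$. Hence it is enough to produce a nonzero global section of $\cL$, i.e. to show $h^0(Z,\cL)>0$. To this end I would combine Serre duality with Riemann--Roch. A normal surface is Cohen--Macaulay, so Serre duality gives $H^2(Z,\cL)^\vee\simeq H^0(Z,\omega_Z\otimes\cL^{-1})$, where $\omega_Z=\cO_Z(K_Z)$ is the dualizing sheaf. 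If this last group were nonzero, a nonzero section would, after restricting to the smooth locus, raising to the $n$-th tensor power (with $n$ the order of $\cL$), and extending across the codimension-two singular locus, yield a nonzero section of the reflexive power $\omega_Z^{[n]}=\cO_Z(nK_Z)$, contradicting the hypothesis that no positive multiple of $K_Z$ is effective. Therefore $h^2(Z,\cL)=0$; the same argument with $\cL=\cO_Z$ gives $h^2(Z,\cO_Z)=h^0(Z,\omega_Z)=0$.

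Since $\cL$ is numerically trivial, Riemann--Roch on the normal surface gives $\chi(Z,\cL)=\chi(Z,\cO_Z)$, the intersection correction $\tfrac12(\cL^2-\cL\cdot K_Z)$ vanishing. Combining with $h^2(\cL)=0$ yields $h^0(Z,\cL)=\chi(\cO_Z)+h^1(Z,\cL)\ge\chi(\cO_Z)$, and $\chi(\cO_Z)=1-h^1(Z,\cO_Z)$ because $h^2(\cO_Z)=0$. In characteristic zero --- and, more generally, whenever the Picard scheme is reduced --- one has $h^1(\cO_Z)=q(Z)=0$, so $\chi(\cO_Z)=1$ and hence $h^0(\cL)\ge1$; by the previous paragraph this forces $\cL\simeq\cO_Z$, completing the argument.

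The hard part is the positive-characteristic phenomenon in which the Picard scheme is non-reduced: then $h^1(\cO_Z)$ may be strictly positive even though $q(Z)=\dim\Pic^0(Z)=0$, so $\chi(\cO_Z)=1-h^1(\cO_Z)$ can be $\le0$ and the Riemann--Roch bound no longer forces a section of $\cL$. To handle this I would pass to the degree-$n$ cyclic cover $\pi\colon W\to Z$ attached to $\cL$ (the $\mu_n$-torsor with $\pi_*\cO_W=\bigoplus_{i=0}^{n-1}\cL^{-i}$), which is finite flat, normal, connected, and satisfies $K_W=\pi^*K_Z$ because $\pi^*\cL\simeq\cO_W$. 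The hypotheses propagate to $W$ --- one checks $h^2(\cO_W)=0$ and that no positive multiple of $K_W$ is effective, since a section of $mK_W$ would come from a section of some $mK_Z\otimes\cL^{-i}$, whose $n$-th power gives an effective multiple of $K_Z$ --- and I would then try to extract a numerical contradiction, or invoke a boundedness input for such covers, to rule out nontrivial torsion. Making this final step unconditional in arbitrary characteristic is where the genuine difficulty lies.
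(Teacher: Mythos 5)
Note first that the paper does not actually prove this lemma: its ``proof'' is the citation \cite[Lemma 2.1]{MR1821186}, so your attempt has to be judged on its own. Your reduction is the right one (finite generation from $q=0$ plus the theorem of the base, then torsion-freeness), and the core computation is correct as far as it goes: Serre duality on the Cohen--Macaulay surface $Z$ kills $h^2$ of every torsion line bundle $\cL$ (a section of $\omega_Z\otimes\cL^{-1}$ would, after taking the $n$-th reflexive power, make $nK_Z$ effective), and then $\chi(\cL)=\chi(\cO_Z)$ for numerically trivial $\cL$ gives $h^0(\cL)\ge \chi(\cO_Z)$, which forces $\cL\simeq\cO_Z$ provided $\chi(\cO_Z)=1$. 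This completes the proof whenever $\Pic^0_Z$ is reduced, in particular in characteristic zero.

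The genuine gap is the one you name yourself: in characteristic $p>0$ the Picard scheme can be non-reduced, so $h^1(\cO_Z)$ may be strictly positive even though $q(Z)=\dim\Pic^0(Z)=0$; then $\chi(\cO_Z)=1-h^1(\cO_Z)\le 0$ and the Riemann--Roch bound says nothing. Your cyclic-cover idea does not close this. The cover $\pi\colon W\to Z$ merely reproduces the same package of hypotheses ($W$ connected, $h^2(\cO_W)=0$, no effective multiple of $K_W$); the resulting inequality $n\chi(\cO_Z)=\chi(\cO_W)\le h^0(\cO_W)+h^2(\cO_W)=1$ is vacuous precisely when $\chi(\cO_Z)\le 0$, so no contradiction appears. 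There are also unaddressed technical points when $p\mid n$: the $\mu_n$-cover is then not \'etale, $W$ need not be normal (so one must pass to a normalization, losing flatness and the clean decomposition of $\pi_*\cO_W$), and the identity $K_W=\pi^*K_Z$ requires an argument for inseparable covers. Since the paper is explicitly concerned with arbitrary characteristic --- indeed its main results and the application of this lemma hinge on positive-characteristic phenomena via Frobenius --- the missing case is exactly the one the lemma is needed for, and as it stands your proposal proves the statement only under the extra hypothesis that $\Pic^0_Z$ is reduced.
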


\begin{proof}
See \cite[Lemma 2.1]{MR1821186}.
\end{proof}

\begin{corollary}\label{cr:adj_closed}
Under the same assumption of \pref{th:adjunction}, $\Pic(X)$ is a free abelian group of finite rank.
\end{corollary}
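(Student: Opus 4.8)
The plan is to deduce the corollary from \pref{lm:freeness} by verifying its two hypotheses for $X$: that the irregularity $q(X) = \dim \Pic^0(X)$ vanishes, and that no positive multiple of $K_X$ is effective. Granting both, \pref{lm:freeness} immediately gives that $\Pic(X)$ is a free abelian group of finite rank, which is exactly the assertion.

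First I would dispose of the second hypothesis, which is the easy one. Since $-(K_X+\Delta)$ is nef and big with $\lfloor \Delta \rfloor = 0$, and $\Delta$ is effective, the divisor $-K_X$ is itself big (this is precisely the argument already used at the start of the proof of \pref{th:adjunction}, where one passes to the minimal resolution and notes that $-K_{\Xtilde}$, hence $-K_X$, is big). Bigness of $-K_X$ means $K_X$ lies in the interior of the negative of the pseudo-effective cone; consequently $nK_X$ cannot be effective for any $n>0$, since an effective class is pseudo-effective while $nK_X$ is anti-big and therefore strictly anti-pseudo-effective. This rules out the effectivity of any positive multiple of $K_X$.

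The main work, and the expected obstacle, is establishing $q(X) = 0$, i.e.\ that $\Pic^0(X)$ is trivial. Here is where \pref{th:adjunction} does the heavy lifting: it shows that every numerically trivial line bundle $\cL$ on $X$ is torsion, with $\cL^{\otimes m} \simeq \cO_X$ for a fixed $m$ depending only on $X$. In particular $\Pic^0(X)$, which consists of numerically trivial classes in the connected component of the identity, is annihilated by $m$. But $\Pic^0(X)$ is a connected commutative algebraic group, and a connected group scheme killed by a fixed integer $m$ must be trivial: over an algebraically closed field a positive-dimensional $\Pic^0$ would contain elements of arbitrarily large (or non-$m$-torsion) order, contradicting the uniform bound. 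Hence $\Pic^0(X) = 0$ and $q(X)=0$.

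Assembling these, both hypotheses of \pref{lm:freeness} hold, and the conclusion follows. The subtle point to handle carefully is the passage from ``$\Pic^0$ is $m$-torsion'' to ``$\Pic^0$ is trivial'': one must invoke that $\Pic^0(X)$ is a (reduced) connected algebraic group and that no nontrivial such group can be uniformly torsion. In characteristic zero this is immediate since $\Pic^0(X)$ would be an abelian variety (or a connected extension thereof) of positive dimension, which has non-torsion points; in positive characteristic the same conclusion holds once one knows the uniform bound $m$ kills the entire connected component. This is exactly the gain provided by \pref{th:adjunction}, whose strength lies in the uniformity of $m$.
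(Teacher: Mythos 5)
Your overall strategy is the same as the paper's: reduce to \pref{lm:freeness} by checking that no multiple of $K_X$ is effective and that $q(X)=0$, with the torsion statement of \pref{th:adjunction} as the engine. Your first step is fine: it is essentially the paper's \pref{lm:non_effective}, and your ``$-K_X$ big plus $nK_X$ pseudo-effective gives a contradiction'' argument is a correct variant of it.

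The genuine gap is in the step from ``$\Pic^0(X)$ is killed by $m$'' to ``$\Pic^0(X)$ is trivial.'' Your key claim --- that no nontrivial connected commutative algebraic group over an algebraically closed field can be uniformly $m$-torsion --- is \emph{false} in positive characteristic: the additive group $\mathbb{G}_{a}$, and more generally any smooth connected unipotent group, is killed by a power of $p$. This is not a pedantic worry, because for non-normal projective varieties $\Pic^0$ really can be $\mathbb{G}_{a}$ (e.g.\ the cuspidal cubic), so some geometric input about $X$ must enter precisely at this point. Your proposed fix for characteristic $p$ (``the same conclusion holds once one knows the uniform bound $m$ kills the entire connected component'') is circular: that $m$ kills $\Pic^0$ is the hypothesis you start from, not the missing ingredient. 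What is actually needed, and what the paper supplies, is that $\Pic^0_X$ is an \emph{abelian variety}: it is projective because $X$ is normal and projective (Grothendieck's theorem, \cite[THEOREM 9.5.4]{MR2222646}), and smooth because $h^2(X,\cO_X)=0$ (via \cite[PROPOSITION 9.5.19]{MR2222646}; the vanishing itself comes from Serre duality and the non-effectivity of $K_X$). On an abelian variety the multiplication-by-$m$ map is finite and surjective, so it cannot factor through the identity unless the abelian variety is a point; this yields $q(X)=0$. In characteristic zero your argument is complete as stated (there a connected commutative algebraic group killed by $m$ is indeed trivial, since unipotent groups are torsion-free), but the paper works in arbitrary characteristic, so excluding a unipotent part of $\Pic^0_X$ via properness is an essential, and in your write-up missing, step.
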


\begin{proof}
If $X$ is rational, the assertion is obvious. Suppose otherwise.
It is enough to show that $q(X)=0$ by \pref{lm:freeness} and \pref{lm:non_effective} below.
Note that in this case 
$\Pic^0_X $ is smooth
since
$
 h ^2 ( \cO _{ X } )= 0
$
by \cite[PROPOSITION 9.5.19]{MR2222646}.

Moreover $\Pic^0_X$ is an abelian variety, 
because $\Pic^0_X$ is projective by \cite[THEOREM 9.5.4]{MR2222646}.
We can now apply \pref{th:adjunction} to $(X, \Delta)$ 
to obtain an integer $m>0$ such that 
$
L^{\otimes m} \simeq \cO_{X}
$ 
for all $L \in \Pic^0_X$.
Hence the $m$-th power map
\[ 
 \m \colon \Pic^0_X \to \Pic^0_X
\]
of the abelian variety $\Pic^0_X$ factors through the constant map to the identity.
On the other hand $\m$ is surjective and finite by generalities of abelian varieties; for example, it follows from \cite[p.59, Corollary 3]{MR2514037}
that the pull-back of an ample line bundle by $\m$ is ample.
Hence we see $q(X)=0$, concluding the proof.
\end{proof}

\begin{lemma}\label{lm:non_effective}
Suppose $-(K_X + \Delta)$ is big for some effective $\bR$-divisor $\Delta$.
Then $nK_X$ is not effective for all $n>0$.
\end{lemma}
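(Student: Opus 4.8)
The plan is to argue by contradiction. Suppose that $nK_X$ is effective for some $n > 0$, so that there is an effective Weil divisor $D$ with $\cO_X(nK_X) \simeq \cO_X(D)$. I would then fix an ample Cartier divisor $H$ on $X$ and compute the intersection number $(K_X + \Delta) \cdot H$ in two different ways, deriving incompatible sign information.

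On one hand, since $-(K_X+\Delta)$ is big and $\bR$-Cartier, it has strictly positive intersection with the ample class $H$. The cleanest justification is Kodaira's lemma: a big $\bR$-Cartier divisor is numerically equivalent to the sum of an ample and an effective divisor — this is the same fact, used in \pref{rm:nakayama's_results}, that the big cone is the interior of the pseudo-effective cone — and intersecting such a sum with the ample $H$ gives a positive number. Hence $(K_X+\Delta)\cdot H < 0$.

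On the other hand, I would expand $n(K_X + \Delta)\cdot H = nK_X\cdot H + n\Delta \cdot H$ and observe that $nK_X \cdot H = D \cdot H \ge 0$ and $n\Delta\cdot H \ge 0$, because $D$ and $\Delta$ are effective and $H$ is ample. This gives $(K_X+\Delta)\cdot H \ge 0$, contradicting the previous paragraph and completing the proof.

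The point requiring care is that on a normal surface the individual divisors $K_X$ and $\Delta$ need not be $\bR$-Cartier — only their sum is assumed to be. I would therefore interpret all intersection numbers involving them through the bilinear pairing between Weil divisors and Cartier divisors on a normal surface, which is well-defined, linear in the Weil argument, and nonnegative when the Weil divisor is effective and the Cartier divisor is ample (indeed nef). With this pairing the additivity $n(K_X+\Delta)\cdot H = nK_X\cdot H + n\Delta\cdot H$ is immediate, and its left-hand side agrees with the genuine $\bR$-Cartier intersection because $K_X + \Delta$ is $\bR$-Cartier. This compatibility is the only subtlety; once it is in place, the argument reduces to the short numerical contradiction above.
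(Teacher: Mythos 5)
Your proof is correct, but it takes a genuinely different route from the paper's. The paper argues entirely with sections and linear equivalence: from $nK_X \sim D \ge 0$ and bigness it produces a nonzero effective $\bR$-divisor $D' \sim_{\bR} -n(K_X+\Delta)$, observes that $D + D' + n\Delta$ is then a nonzero effective principal $\bR$-divisor, and derives a contradiction from the fact that on a projective scheme with $H^0(X,\cO_X)=\bfk$ an effective principal $\bR$-divisor must be trivial. You instead argue numerically, pairing everything with an ample Cartier divisor $H$: bigness forces $(K_X+\Delta)\cdot H<0$ (via Kodaira's lemma, i.e.\ the openness of the big cone inside the pseudo-effective cone), while effectivity of $D$ and $\Delta$ forces $(K_X+\Delta)\cdot H\ge 0$. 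Both arguments are sound. The paper's version is more elementary in its inputs: it needs no intersection theory on normal (possibly non-$\bQ$-factorial) surfaces and no Kodaira-type lemma, and it works verbatim on any projective scheme with $H^0(\cO_X)=\bfk$ in any dimension. Your version is purely numerical, so it depends only on the numerical class of $-(K_X+\Delta)$ and avoids handling principal $\bR$-divisors; the price is the two standard facts you correctly flag, namely the Weil--Cartier pairing on a normal surface (linear in the Weil argument, nonnegative for effective against nef, compatible with the $\bR$-Cartier intersection) and the characterization of bigness via the pseudo-effective cone, the latter of which the paper itself already invokes in \pref{rm:nakayama's_results}, so neither is outside the paper's toolkit.
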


\begin{proof}
Suppose
$
 n K _{ X } \sim D \ge 0
$
for a contradiction. 
Since
$
 - \lb K _{ X } + \Delta \rb
$
is big,
$
 - n (K_X + \Delta) \sim _{ \bR } D ' \ge 0
$
for some integer $n > 0$ and an effective $\bR$-divisor
$
 D ' \neq 0
$.
Summing up, we see that the non-zero effective $\bR$-divisor
$
 D + D ' + n \Delta
$
is a principal $\bR$-divisor.
This contradicts the fact that an effective principal $\bR$-divisor
on a $\bfk$-scheme $X$ such that
$
 H ^{ 0 } \lb X, \cO _{ X } \rb = \bfk
$
is never effective unless it is trivial.
\end{proof}

We now generalize \pref{cr:adj_closed} to arbitrary fields.
In the rest of this section, let
$
\bfk
$
be not necessarily algebraically closed and
$
 X
$
be a projective, geometrically integral, and geometrically normal surface
over $\bfk$.
Let $\overline{\bfk}$ be an algebraic closure of $\bfk$, and
\begin{align}
 \pi \colon \Xbar = X \otimes _\bfk \overline{\bfk} \to X
\end{align}
be the natural projection. For a coherent sheaf
$
 \cF \in \coh X
$, we use the shorthand notation
$
 \overline{\cF} \coloneqq \pi ^{ * } \cF
$. Then by the standard descent theory we have a canonical isomorphism
\begin{align}\label{eq:ext}
 H^i(X, \cF)\otimes_\bfk \overline{\bfk}
 \simeq 
 H^i(\Xbar, \overline{\cF} ).
\end{align}
In particular, $X$ has at worst rational singularities if and only if
$\Xbar$ does. Furthermore, note that if $L$ is a nef and big line bundle, 
then so is $\pi^*L$.

\begin{lemma}\label{lm:field_ext}
Let
$\bfk$ and $X$ be as above.
Let
$
\Delta
$
be as in \pref{th:adjunction} and assume that
any irreducible component of $\Delta$ is geometrically reduced.
Then the \bR-divisor $\Deltabar = \pi ^{ * } \Delta$ on $\Xbar$
also satisfies
$\lfloor \Deltabar \rfloor = 0$
and
$-(K_\Xbar + \Deltabar)$ is nef and big.
\end{lemma}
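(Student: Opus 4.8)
The plan is to reduce both assertions to the behavior of the pull-back $\pi^{*}$ along the flat base change $\pi \colon \Xbar \to X$, and to isolate the role of the geometric reducedness hypothesis in the computation of $\lfloor \Deltabar \rfloor$. First I would record that, since $X$ is geometrically normal, $\Xbar$ is normal and the canonical (Weil) divisor is compatible with the base field extension, i.e. $K_{\Xbar} = \pi^{*} K_{X}$; together with the definition $\Deltabar = \pi^{*}\Delta$ this gives the identity $-(K_{\Xbar}+\Deltabar) = \pi^{*}\bigl(-(K_{X}+\Delta)\bigr)$ of $\bR$-Cartier divisors. Thus it remains to show that $\pi^{*}$ sends the nef and big $\bR$-Cartier divisor $-(K_{X}+\Delta)$ to a nef and big one, and that taking $\pi^{*}$ does not destroy the condition that the round-down vanishes.

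For the floor, write $\Delta = \sum_{i} a_{i}\Delta_{i}$ with distinct prime divisors $\Delta_{i}$ and $0 \le a_{i} < 1$ (this is precisely the hypothesis $\lfloor \Delta \rfloor = 0$). The key point is that each $\Delta_{i}$ is assumed geometrically reduced, so its pull-back $\pi^{*}\Delta_{i}$ is a reduced divisor on $\Xbar$, i.e. each of its prime components occurs with multiplicity one. Moreover, for $i \neq j$ the divisors $\pi^{*}\Delta_{i}$ and $\pi^{*}\Delta_{j}$ share no component, because any component of $\pi^{*}\Delta_{i}$ dominates $\Delta_{i}$ under $\pi$ while a component of $\pi^{*}\Delta_{j}$ dominates the distinct prime $\Delta_{j}$. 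Hence every coefficient appearing in $\Deltabar = \sum_{i} a_{i}\,\pi^{*}\Delta_{i}$ is one of the $a_{i} \in [0,1)$, and therefore $\lfloor \Deltabar \rfloor = 0$.

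For nefness I would use that nefness can be checked curve by curve: every integral curve on $\Xbar$ maps onto an integral curve on $X$ (as $\pi$ is flat of relative dimension zero), and the intersection number is multiplied by a positive integer, so $\pi^{*}\bigl(-(K_{X}+\Delta)\bigr)$ remains nef. For bigness I would avoid the delicate comparison of $\lfloor k\pi^{*}D\rfloor$ with $\pi^{*}\lfloor kD\rfloor$ altogether and instead use the surface criterion that a nef $\bR$-Cartier divisor $D$ on a normal projective surface is big if and only if $(D^{2})>0$. Since $-(K_{X}+\Delta)$ is nef and big we have $\bigl((K_{X}+\Delta)^{2}\bigr)>0$, and self-intersection numbers are preserved under the base field extension; this follows from the descent isomorphism \eqref{eq:ext}, which identifies the Euler characteristics entering the intersection pairing. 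Thus $\bigl((K_{\Xbar}+\Deltabar)^{2}\bigr) = \bigl((K_{X}+\Delta)^{2}\bigr) > 0$, and combined with the nefness just established this shows $-(K_{\Xbar}+\Deltabar)$ is nef and big.

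The main obstacle is the floor computation: without geometric reducedness the pull-back of a prime component can acquire multiplicities $\ge 2$ (for instance over an inseparable residue field extension of its generic point), in which case a coefficient $a_{i}$ could be scaled to a value $\ge 1$ and spoil $\lfloor \Deltabar \rfloor = 0$; the hypothesis is exactly what rules this out. A secondary technical point is the base-change identity $K_{\Xbar} = \pi^{*}K_{X}$ for the possibly singular, but normal, surface $X$, which I would justify via the compatibility of the dualizing sheaf with the flat base change $\pi$.
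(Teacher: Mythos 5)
Your proof is correct and takes essentially the same route as the paper's: geometric reducedness of each component forces $\lfloor \Deltabar \rfloor = 0$ (multiplicity-one pullback, and distinct primes stay disjoint in components), and the identity $-(K_{\Xbar}+\Deltabar) = \pi^{*}(-(K_{X}+\Delta))$ --- which the paper verifies by restricting to the smooth locus and you obtain from flat base change of the dualizing sheaf, an equivalent argument --- reduces the lemma to descent of nefness and bigness along $\pi$. The only divergence is in that last step: the paper disposes of it by the unproved remark preceding the lemma (nef and big pull back to nef and big), whereas you actually prove it, with nefness checked curve by curve and bigness via the numerical criterion that a nef $\bR$-Cartier divisor on a normal projective surface is big if and only if $(D^{2})>0$, the self-intersection being preserved under base change by \eqref{eq:ext}; this is a genuinely useful refinement, since for $\bR$-divisors the comparison of $\lfloor k\pi^{*}D \rfloor$ with $\pi^{*}\lfloor kD \rfloor$ is the one delicate point in the definition of bigness used in the paper, and your criterion sidesteps it cleanly.
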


\begin{proof}
Assume that $\Delta = \sum a_i D_i$, where $D_i$ is a prime divisor on $X$.
Let $\Deltabar$ = $\sum a_i \Dbar _{ i }$ where 
$\Dbar _{ i } = D_i \otimes _\bfk \overline{\bfk} \subset \Xbar$. Since each $D_i$ is geometrically reduced and $\lfloor \Delta \rfloor = 0$, we have $\lfloor \Deltabar \rfloor = 0$. Then we only have to show that $-\pi^*(K_X + \Delta) = -(K_\Xbar + \Deltabar)$, but we can check this by restricting everything over the smooth locus of $X$.
\end{proof}



%

Now we are ready to prove \pref{th:adj}.

\begin{corollary}\label{cr:adj}
Under the same assumption of \pref{lm:field_ext}, $\Pic(X)$ is a free abelian group of finite rank.
\end{corollary}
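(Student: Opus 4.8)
The plan is to deduce the statement from the already-settled algebraically closed case \pref{cr:adj_closed} by base change to $\overline{\bfk}$. First I would check that $\Xbar$ together with $\Deltabar$ falls under the hypotheses of \pref{cr:adj_closed}: since $X$ is geometrically normal and geometrically integral, $\Xbar = X \otimes _{ \bfk } \overline{\bfk}$ is a normal projective integral surface over the algebraically closed field $\overline{\bfk}$, and \pref{lm:field_ext} provides $\lfloor \Deltabar \rfloor = 0$ together with the nefness and bigness of $-(K_\Xbar + \Deltabar)$. Consequently \pref{cr:adj_closed} applies and shows that $\Pic(\Xbar)$ is a free abelian group of finite rank.

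The remaining and principal point is to prove that the pullback homomorphism $\pi ^{ * } \colon \Pic(X) \to \Pic(\Xbar)$ is injective. Granting this, $\Pic(X)$ embeds as a subgroup of the finitely generated free abelian group $\Pic(\Xbar)$, and since every subgroup of such a group is again free of finite rank, the proof is complete. To establish injectivity I would argue with global sections. Let $L \in \Pic(X)$ lie in the kernel, so that $\overline{L} \coloneqq \pi ^{ * } L \simeq \cO_{\Xbar}$. Geometric integrality forces $H ^{ 0 }(X, \cO_X) = \bfk$ and $H ^{ 0 }(\Xbar, \cO_{\Xbar}) = \overline{\bfk}$, and so the base-change isomorphism \eqref{eq:ext} in degree $i = 0$ gives $H ^{ 0 }(X, L) \otimes _{ \bfk } \overline{\bfk} \simeq H ^{ 0 }(\Xbar, \overline{L}) \simeq \overline{\bfk}$, whence $\dim_{\bfk} H ^{ 0 }(X, L) = 1$; applying the same reasoning to $L ^{ -1 }$ (whose pullback is likewise trivial) yields $\dim_{\bfk} H ^{ 0 }(X, L ^{ -1 }) = 1$.

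Now I would pick nonzero sections $s \in H ^{ 0 }(X, L)$ and $t \in H ^{ 0 }(X, L ^{ -1 })$ and form their product $s\cdot t \in H ^{ 0 }(X, L \otimes L ^{ -1 }) = H ^{ 0 }(X, \cO_X) = \bfk$. Its image after base change is the product of the two induced nonzero sections on the integral scheme $\Xbar$, hence is nonzero, so $s \cdot t \in \bfk ^{ \times }$ is a nowhere-vanishing global section of $\cO_X$. This forces $s$ itself to be nowhere vanishing, and therefore $s$ defines an isomorphism $\cO_X \simeq L$, so $L \simeq \cO_X$. This proves the injectivity of $\pi ^{ * }$ and finishes the argument. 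I expect the whole difficulty to be concentrated in this injectivity: it rests entirely on the geometric integrality of $X$, which simultaneously guarantees $H ^{ 0 }(X, \cO_X) = \bfk$ and the validity of flat base change for $H ^{ 0 }$, and once these are in place no further obstruction remains.
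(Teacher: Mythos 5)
Your proposal is correct and follows essentially the same route as the paper: reduce to the algebraically closed case via \pref{lm:field_ext} and \pref{cr:adj_closed}, then conclude by the injectivity of $\pi^* \colon \Pic(X) \to \Pic(\Xbar)$, which the paper also proves from $h^0(L) = h^0(L^{-1}) = 1$ and $H^0(X,\cO_X) = \bfk$. Your argument merely fills in the details (the product-of-sections step) that the paper leaves implicit.
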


\begin{proof}
Note that the natural homomorphism $\pi^* \colon \Pic (X) \to \Pic (\Xbar)$ is injective.
In fact, if $\pi ^*L = \cO_\Zbar$,
then $h^0(L) = h^0(L^{-1}) = 1$ and this implies that $L = \cO_Z$ since $H^0(Z, \cO_Z) = \bfk$. 
Hence we may assume that
$
 \bfk = \bar{\bfk}
$ by \pref{lm:field_ext}, and this is done in \pref{cr:adj_closed}
\end{proof}


%
%
\subsection{Proof via combination of known results}
\label{sc:combination}

We show the \pref{cr:adj} by combining the following
two known results.

\begin{proposition}\label{pr:rational}
Under the same assumption of \pref{lm:field_ext},
we also assume that $X$ has at worst rational singularities.
Then $\Xbar$ is a rational surface.
In particular, $\Pic(X)$ is a free abelian group of finite rank.
\end{proposition}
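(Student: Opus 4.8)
The plan is to reduce to the case of an algebraically closed base field and then deduce the rationality of $\Xbar$ by combining $\bQ$-factoriality with the positive-characteristic base point free theorem of \cite{MR3383599}, leaving the freeness of $\Pic(X)$ as a formal consequence. First, by the descent isomorphism \pref{eq:ext} the surface $X$ has rational singularities if and only if $\Xbar$ does, and by \pref{lm:field_ext} the $\bR$-divisor $\Deltabar$ satisfies $\lfloor \Deltabar \rfloor = 0$ while $-(K_{\Xbar} + \Deltabar)$ is nef and big. Since the pull-back $\pi^* \colon \Pic(X) \to \Pic(\Xbar)$ is injective, as observed in the proof of \pref{cr:adj}, and a subgroup of a free abelian group of finite rank is again of that form, it suffices to prove that $\Xbar$ is rational and that its Picard group is free of finite rank. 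Hence from now on I would assume $\bfk = \overline{\bfk}$ and write $X$ for $\Xbar$.

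I would then use the standard fact that a normal surface with at worst rational singularities is $\bQ$-factorial, so that $-(K_X + \Delta)$ is a nef and big $\bR$-Cartier divisor on the $\bQ$-factorial surface $X$, and the minimal model theory becomes available. Let $\rho \colon V \to X$ be the minimal resolution. As in the proof of \pref{th:adjunction}, the canonical bundle formula $K_V \equiv \rho^* K_X + \sum a_i E_i$ has all discrepancies $a_i \le 0$ by \cite[Corollary $4.3$]{MR1658959}; combined with the bigness of $-K_X$, which follows from that of $-(K_X + \Delta)$ and the effectivity of $\Delta$, this shows that $-K_V$ is big. Consequently $\kappa(V) = -\infty$, so that $V$ is either rational or birationally ruled over a smooth curve of genus $\ge 1$.

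To exclude the second possibility I would show $q(X) = h^1(\cO_X) = 0$. Because $X$ has rational singularities we have $h^1(\cO_X) = h^1(\cO_V) = q(V)$, so this vanishing forces $V$, which already has $\kappa(V) = -\infty$ and hence $P_2(V) = 0$, to be rational by Castelnuovo's criterion, and therefore $X$ is rational. The vanishing $h^1(\cO_X) = 0$ is exactly what the base point free theorem for surfaces with rational singularities \cite[Corollary $3.6$]{MR3383599} provides: $\bQ$-factoriality makes the minimal model theory applicable, and the resulting Kawamata--Viehweg type vanishing yields $H^i(X, \cO_X) = 0$ for $i > 0$. The \emph{main obstacle} is precisely this vanishing in positive characteristic, where Kodaira-type vanishing fails in general; this is the step for which I rely on the positive-characteristic result of \cite{MR3383599} rather than on a direct computation.

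For the final ``in particular'' assertion, once $X$ is rational its minimal resolution $V$ is a smooth rational surface, so $\Pic(V)$ is free of finite rank. Since $\rho_* \rho^* L \simeq L \otimes \rho_* \cO_V \simeq L$ for every line bundle $L$ on $X$, the pull-back $\Pic(X) \hookrightarrow \Pic(V)$ is injective, whence $\Pic(X) = \Pic(\Xbar)$ is free of finite rank. Tracing back through the reduction of the first paragraph, $\Pic(X)$ over the original field embeds into this group and is therefore itself free of finite rank, completing the argument.
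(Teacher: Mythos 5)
Your overall architecture---reduce to $\bfk = \overline{\bfk}$ via \pref{lm:field_ext} and the injectivity of $\pi^*$, then prove rationality of $\Xbar$ and read off freeness of $\Pic$---matches the paper's, but the step on which everything hinges has a genuine gap. You derive rationality from the vanishing $h^1(\cO_{\Xbar}) = 0$, which you justify by citing \cite[Corollary 3.6]{MR3383599} and describing it as a ``Kawamata--Viehweg type vanishing''. That corollary is not a vanishing theorem: as the paper itself explains in the introduction, it is the base point free theorem for \emph{numerically trivial} nef divisors on surfaces with rational singularities (essentially the Picard-group statement one is after), and it asserts nothing about $H^i(X,\cO_X)$. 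Moreover, an appeal to Kawamata--Viehweg type vanishing is exactly what is unavailable here: the paper works in arbitrary characteristic, and such vanishing is known to fail for surfaces (even klt, rational ones) in positive characteristic; it does not ``result from'' the existence of an MMP. The true logical order in \cite{MR3383599} is the reverse of yours: one first proves that $\Xbar$ is rational---this is \cite[Theorem 3.5]{MR3383599}, the result the paper's own two-line proof cites and which you never invoke---and only afterwards obtains $h^1(\cO_{\Xbar}) = h^1(\cO_V) = 0$ and the triviality of numerically trivial bundles. So the vanishing you need is in fact true, but the only available route to it passes through the rationality you are trying to prove; as written, your argument is circular in effect.

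The gap also cannot be patched by substituting the actual content of Corollary 3.6 into your scheme. What that corollary gives is that every numerically trivial line bundle on $\Xbar$ is semi-ample, hence torsion (with order depending on the bundle). When $\bfk$ is the algebraic closure of a finite field this does not force $q(\Xbar) = \dim \Pic^0_{\Xbar} = 0$: every closed point of an abelian variety over such a field is torsion, so a positive-dimensional $\Pic^0_{\Xbar}$ is perfectly consistent with all numerically trivial bundles being torsion. This is precisely why the paper's other proof (\pref{th:adjunction} and \pref{cr:adj_closed}) works to produce a \emph{uniform} integer $m$ annihilating all of $\Pic^0$, so that the $m$-th power map of the abelian variety $\Pic^0_X$ is constant and hence $q(X)=0$. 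Your minimal-resolution detour (rational singularities imply $\bQ$-factoriality, $-K_V$ big, $\kappa(V) = -\infty$, Castelnuovo's criterion) is sound as far as it goes, but without \cite[Theorem 3.5]{MR3383599} or a uniform torsion bound it cannot close. The correct short proof is the paper's: apply \cite[Theorem 3.5]{MR3383599} to $(\Xbar,\Deltabar)$ for rationality and \cite[Corollary 3.6]{MR3383599} for the Picard statement on $\Xbar$, then descend to $X$ by the injectivity of $\pi^*$, exactly as in your first and last paragraphs.
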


\begin{proof}
Apply 
\cite[Theorem $3.5$]{MR3383599} and \cite[Corollary 3.6]{MR3383599} to 
$(\Xbar, \Deltabar)$.
Then the injectivity of $\pi^*$ in the proof of \pref{cr:adj} implies the second part.
\end{proof}

\begin{proposition}\label{pr:non_rational}
Let
$\bfk$ and $X$ be as \pref{lm:field_ext}.
Assume $X$ has non-rational singularities and 
$nK_X$ is not effective for all $n>0$.
Then $\Pic(X)$ is a free abelian group of finite rank.
In particular if $-(K_X + \Delta)$ is big for some effective $\bR$-divisor $\Delta$
and $X$ has non-rational singularities,
$\Pic(X)$ is a free abelian group of finite rank.
\end{proposition}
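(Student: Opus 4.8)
The plan is to reduce both assertions, via \pref{lm:freeness}, to the vanishing of the irregularity, and then to extract that vanishing from the non-rational singularity. The ``in particular'' clause needs no new idea: if $-(K_X+\Delta)$ is big for an effective $\bR$-divisor $\Delta$, then \pref{lm:non_effective} shows $nK_X$ is not effective for any $n>0$, which is exactly the hypothesis of the first assertion. So I would concentrate on the first assertion.

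First I would pass to the algebraically closed case. As in the proof of \pref{cr:adj}, the pull-back $\pi^*\colon \Pic(X)\to\Pic(\Xbar)$ is injective, and a subgroup of a finitely generated free abelian group is again free of finite rank; hence it suffices to prove that $\Pic(\Xbar)$ is free of finite rank. Both hypotheses pass to $\Xbar$: its singularities remain non-rational by the discussion around \eqref{eq:ext}, and $H^0(\Xbar, nK_{\Xbar}) = H^0(X, nK_X)\otimes_\bfk\overline{\bfk} = 0$ for every $n>0$ by \eqref{eq:ext} applied to $\cF=\cO_X(nK_X)$. So I may assume $\bfk=\overline{\bfk}$, and by \pref{lm:freeness} together with the assumed non-effectivity of all $nK_X$, it then remains only to prove $q(X)=0$.

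To prove $q(X)=0$ I would work through the minimal resolution $\varphi\colon\Xtilde\to X$. Writing $K_{\Xtilde}=\varphi^*K_X+\sum a_i E_i$ with $a_i\le 0$ (by \cite[Corollary 4.3]{MR1658959}) and using $\varphi_*\cO_{\Xtilde}=\cO_X$, one obtains injections $H^0(\Xtilde, nK_{\Xtilde})\hookrightarrow H^0(X,nK_X)=0$, so that $\kappa(\Xtilde)=-\infty$ and $\Xtilde$ is birationally ruled over a curve $B$ with $\Pic^0(\Xtilde)\simeq\Pic^0(B)$. Now for any $L\in\Pic^0(X)$ the pull-back $\varphi^*L$ is numerically trivial and restricts to the trivial bundle on the whole exceptional fibre $Z$ over the non-rational point; the non-rationality forces a component of $Z$ to dominate $B$, so a class of $\Pic^0(B)$ that is trivial on $Z$ can only be torsion. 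Hence $\varphi^*\Pic^0(X)$, and therefore $\Pic^0(X)$, is finite, which gives $q(X)=0$. This is precisely the content of \cite[Theorem 2.2 (iii)]{MR1821186}, which I would cite rather than reprove. The main obstacle is exactly this last point: making the ``absorption'' of $\Pic^0(B)$ by the singularity precise for an arbitrary non-rational exceptional configuration, rather than for the transparent model of the cone over an elliptic curve, is delicate, and that is why I rely on the cited theorem.
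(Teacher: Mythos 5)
Your proposal is correct and follows essentially the same route as the paper: reduce to the algebraically closed case via the injectivity of $\pi^*$ (noting $K_{\Xbar}\simeq\pi^*K_X$ so both hypotheses descend), invoke \cite[Theorem 2.2]{MR1821186} for the key conclusion, and dispose of the ``in particular'' clause with \pref{lm:freeness}'s companion \pref{lm:non_effective}. The only cosmetic difference is that you factor the citation through \pref{lm:freeness} plus the vanishing of $q(X)$, whereas the paper cites Schr\"oer's theorem in one step; your intermediate sketch of why non-rationality forces $q=0$ is not needed since you ultimately rely on the same reference.
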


\begin{proof}
The first part follows from \cite[Theorem 2.2]{MR1821186} and the injectivity of $\pi^*$
since
$
K_\Xbar \simeq \pi^ *K_X
$.
Then the second part follows from \pref{lm:non_effective}.
\end{proof}


\subsection{What if $\lfloor \Delta \rfloor \in \lc 0, 1 \rc$?}
\label{sc:Delta=1}

Consider the case when some of the coefficients of $\Delta$ is $1$; namely,
replace the assumption
$
 \lfloor \Delta \rfloor = 0
$
of \pref{cr:adj}
with the weaker condition
$
 \lfloor \Delta \rfloor \subset \lc 0, 1 \rc
$.
Even in this generality, if $-(K_X + \Delta)$ is ample, then \pref{th:adjunction} and \pref{cr:adj} are still true. The proof of \pref{th:adjunction} given above works in this generality without change.
On the other hand, they do not hold true if $-(K_X + \Delta)$ is only nef and big. Such an example is given in \pref{eg:counter} below.

In fact \pref{eg:counter} is typical in the following sense. Assume that $\Pic (X)$ is not a free abelian group of finite rank. Then by closely examining the proof of \pref{th:adjunction},
especially the computation \eqref{neq:intersection},
we can check that the following properties have to be satisfied.
\begin{itemize}
\item
$g(C)=1$.

\item
$\alpha=1$.

\item
$
 \Ctilde \cap \Supp R = \emptyset
$.
\end{itemize}
Furthermore it follows from \pref{pr:non_rational} that $X$ has at worst rational singularities. It would be interesting to classify these exceptional cases.

\begin{example}\label{eg:counter}
Let $X$ be the projective cone over a smooth plane cubic curve $C$ defined by an equation
$
 F \lb x, y, z \rb \in \bfk [ x, y, z ]
$.
Since $X$ is the singular hypersurface of
$
 \bP ^{ 3 } _{ x, y, z, w }
$
defined by $F$, we obtain
$
 -K_X = \cO_X( 1 )
$
by the adjunction formula \cite[Chapter II, Theorem 7.11]{MR0463157}.
As is well known, there is a birational contraction
\begin{align}
 \varepsilon \colon \Xtilde \coloneqq \bP _{ C } \lb \cO _{ C } \oplus \cO _{ C } ( 1 ) \rb\to X
\end{align}
which contracts the section
$
 E \subset \Xtilde
$
of $\pi$ corresponding to the quotient map
$
 \cO _{ C } \oplus \cO _{ C } ( 1 ) \to  \cO _{ C }
$
to the vertex of $X$.

By using the structure morphism
$
 \pi \colon \Xtilde \to C
$, we obtain
\begin{align*}
 \bR \Gamma \lb \Xtilde, \cO _{ \Xtilde } \rb \simeq \bR \Gamma \lb C, \bR \pi _{ * } \cO _{ \Xtilde } \rb
 \simeq
 \bR \Gamma \lb C, \cO _{ C } \rb \simeq \bfk \oplus \bfk [ - 1 ].
\end{align*}
Hence the Picard scheme of $\Xtilde$ is smooth of dimension one, so that $X$ admits a numerically trivial but non-trivial line bundle.

On the other hand, by applying the adjunction formula to the embedding
$
 E \hookrightarrow \Xtilde
$,
one can easily verify the equality
$
 - ( K _{ \Xtilde } +E ) = - \varepsilon ^{ * } K _X
$.
Note that the right hand side is nef and big, since
$
 - K _{ X } = \cO _{ X }( 1 )
$
is (very) ample. Hence the pair $(\Xtilde, E)$ is an example which does not satisfy the conclusion of \pref{th:adjunction}.
\end{example}

\section{Some examples of ruled surfaces whose anti-canonical sheaf is big}
In this section we construct some examples of (not necessarily geometrically) ruled surfaces whose anti-canonical sheaf is big. 
Note that such a surface is obtained from a geometrically ruled surface
(i.e. a minimal model in the classical sense) with big anti-canonical bundle by repeatedly blowing up smooth points. The examples below indicate that there are quite a few examples of such surfaces. This is comparable to the case of big rational surfaces \cite{MR2824848}. 

We first give a bigness criterion for line bundles on geometrically ruled surfaces which is associated to decomposable rank two vector bundles.

\begin{proposition}\label{pr:crit_big}
Let
$
 L
$
be a line bundle on
$
 C
$
with
$
\deg L \ge 0
$. 
Consider
$
 E \coloneqq L \oplus \cO _{ C }
$
and let
$
 \pi \colon X \coloneqq \bP_C(E) \to C
$
be the canonical projection. For a line bundle
$
 L _{ C }
$
on
$
 C
$,
the line bundle
$
 \cO _{ X } (n) \otimes  \pi^* L _{ C }
$
on
$
 X
$
is big if and only if $ n > 0 $ and
$
 n \deg L + \deg L _{ C } > 0
$.
\end{proposition}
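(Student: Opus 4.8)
The plan is to compute the dimension growth of $H^0(X, (\cO_X(n) \otimes \pi^* L_C)^{\otimes m})$ directly via the projection $\pi$, exactly as in the proofs of \pref{pr:AA} and \pref{pr:BB}, since here $E = L \oplus \cO_C$ is decomposable of rank two. First I would note that for $n \le 0$ the line bundle cannot be big: if $n < 0$ then $\pi_*(\cO_X(mn) \otimes \pi^* L_C^{\otimes m}) = S^{mn}(E) \otimes L_C^{\otimes m} = 0$ for $m > 0$ (negative symmetric power), and if $n = 0$ the bundle is a pullback $\pi^* L_C$, whose sections grow at most linearly in $m$ rather than quadratically, so it fails the bigness bound $h^0 > c m^2$. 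This disposes of the necessity of $n > 0$.

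For $n > 0$, I would use the push-forward formula
\begin{align*}
 \pi_* \bigl( \cO_X(mn) \otimes \pi^* L_C^{\otimes m} \bigr)
 \simeq S^{mn}(E) \otimes L_C^{\otimes m}
 \simeq \bigoplus_{j=0}^{mn} L^{\otimes j} \otimes L_C^{\otimes m},
\end{align*}
using that $S^{mn}(L \oplus \cO_C) = \bigoplus_{j=0}^{mn} L^{\otimes j}$. Hence
\begin{align*}
 h^0\bigl( X, (\cO_X(n) \otimes \pi^* L_C)^{\otimes m} \bigr)
 = \sum_{j=0}^{mn} h^0\bigl( C, L^{\otimes j} \otimes L_C^{\otimes m} \bigr).
\end{align*}
Each summand has degree $j \deg L + m \deg L_C$, and by Riemann--Roch its $h^0$ is at most $j \deg L + m \deg L_C + 1$ and, once this degree exceeds $2g - 2$, equals exactly $j \deg L + m \deg L_C + 1 - g$. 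I would now analyze this sum under the two sides of the criterion.

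For the \emph{if} direction, assume $n\deg L + \deg L_C > 0$. Restricting the sum to indices $j$ with, say, $\tfrac{1}{2} mn \le j \le mn$, every term has degree roughly $m \cdot (\text{a positive constant})$, which exceeds $2g-2$ for large $m$; there are on the order of $mn$ such indices, each contributing a term linear in $m$, so the total grows like a positive constant times $m^2$, giving bigness. For the \emph{only if} direction, assume $n \deg L + \deg L_C \le 0$; since $\deg L \ge 0$, the degree $j \deg L + m \deg L_C$ is maximized at $j = mn$, where it is $\le 0$, so every summand has non-positive degree and hence $h^0(C, L^{\otimes j}\otimes L_C^{\otimes m}) \le g$ whenever it is nonzero (as $h^1$ is bounded by $g$, by the same argument as in \eqref{eq:dd}). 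The number of summands is $mn + 1$, so the total is bounded by $g(mn+1) = O(m)$, which fails the quadratic bigness bound. The main obstacle is simply keeping the Riemann--Roch bookkeeping honest near the threshold degree $2g-2$, but this is routine and already handled in the same style in \pref{pr:AA} and \pref{pr:BB}; the combinatorics here is easier since the rank is two and the symmetric power splits as a sum of line bundles indexed by a single integer.
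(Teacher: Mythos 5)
Your approach is essentially the paper's own: the paper proves \pref{pr:crit_big} simply by pointing to the argument of \pref{cr:main_theorem_split_case} (i.e.\ of \pref{pr:AA} and \pref{pr:BB}) --- push forward by $\pi$, split the symmetric power of $L \oplus \cO_C$ into line bundles, and count sections via Riemann--Roch --- which is exactly what you carry out, together with the easy elimination of $n \le 0$. Both directions of your argument are sound in substance, but one quantitative point in the ``if'' direction needs fixing: with the fixed cutoff $\frac{1}{2}mn \le j \le mn$ it is \emph{not} true that every term has degree bounded below by a positive multiple of $m$; for instance if $\deg L = 2$, $n = 1$, $\deg L_C = -1$, the term at $j = mn/2$ has degree $0$. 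The remedy is the same device as in the proof of \pref{pr:AA}: choose a sufficiently small rational $\delta > 0$ with $(1-\delta)\, n \deg L + \deg L_C > 0$ (possible precisely because $n \deg L + \deg L_C > 0$ and $\deg L \ge 0$), and restrict to $(1-\delta)mn \le j \le mn$; then each of the roughly $\delta m n$ terms satisfies $h^0 \ge m\bigl((1-\delta)\, n \deg L + \deg L_C\bigr) + 1 - g$ by Riemann--Roch, which yields the quadratic lower bound. With that adjustment the proof is complete and matches the argument the paper intends.
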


\begin{proof}
This is proved by almost the same argument in the proof of \pref{pr:main_theorem_split_case} or
\pref{rm:nakayama's_results}. As we discussed in \pref{rm:nakayama's_results}, this is a special case of \cite[Chapter IV. 3.7. Lemma]{MR2104208}.
\end{proof}

Below is an immediate corollary of \pref{pr:crit_big}.

\begin{corollary}\label{cr:big}
Let
$
L, X
$
be as in \pref{pr:crit_big}. For a line bundle
$
 M
$
on
$
 X
$,
the line bundle
$
 \cO _{ X } (K_X) \otimes M^{-1}
$ 
is big if and only if either
$
 M \simeq \pi ^{ * } L _{ C }
$
for some line bundle
$
 L _{ C }
$
on
$
 C
$
such that
\begin{align}
 \deg L _{ C } + \deg L > 2 g - 2,
\end{align}
or
$
 M \simeq \cO _{ X } (1) \otimes \pi ^* L _{ C }
$
for some line bundle
$
 L _{ C }
$
on
$
 C
$
such that
\begin{align}
 \deg L _{ C } < - ( 2 g - 2 ).
\end{align}
\end{corollary}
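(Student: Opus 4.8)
The plan is to deduce the statement directly from the bigness criterion \pref{pr:crit_big}, after putting the line bundle in question into the normal form to which that proposition applies. Two structural inputs are needed. First, since $X=\bP_C(E)$ with $E=L\oplus\cO_C$ of rank $2$, the canonical bundle formula gives $\cO_X(K_X)\simeq\cO_X(-2)\otimes\pi^*(\omega_C\otimes\wedge^2 E)\simeq\cO_X(-2)\otimes\pi^*(\omega_C\otimes L)$, using $\wedge^2 E\simeq L$; in particular $-K_X$ has $\cO_X(1)$-coefficient $2$, and the $\pi^*$-twisting line bundle $\omega_C\otimes L$ has degree $(2g-2)+\deg L$. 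Second, because $X$ is a projective bundle over $C$ we have the splitting $\Pic X=\pi^*\Pic C\oplus\bZ\,\cO_X(1)$, so every line bundle $M$ is uniquely of the form $M\simeq\cO_X(a)\otimes\pi^*N$ for an integer $a$ and some $N\in\Pic C$.

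First I would substitute both inputs and collect the result into the shape of \pref{pr:crit_big}, namely $\cO_X(n)\otimes\pi^*D$ with $n$ an explicit integer linear in $a$ and $\deg D$ an explicit expression in $\deg L$, $\deg N$ and $2g-2$. \pref{pr:crit_big} then asserts that this bundle is big if and only if $n>0$ and $n\deg L+\deg D>0$. The first inequality $n>0$ is a condition on the single integer $a$; reading off which values of the $\cO_X(1)$-coefficient of $M$ it allows isolates the two shapes $M\simeq\pi^*L_C$ and $M\simeq\cO_X(1)\otimes\pi^*L_C$ appearing in the statement. For each of these two values I would then feed the corresponding $n$ into the second inequality $n\deg L+\deg D>0$ and simplify, using $\deg\omega_C=2g-2$, to recover the numerical conditions $\deg L_C+\deg L>2g-2$ and $\deg L_C<-(2g-2)$ respectively. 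Both directions of the equivalence come out at once, since \pref{pr:crit_big} is itself an ``if and only if''.

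The argument is essentially bookkeeping, so the one place that genuinely needs care -- and the step I would check most carefully -- is the sign and degree accounting. I would make sure that the twist $\omega_C\otimes L$ coming from the canonical bundle formula is combined correctly with the decomposition $M\simeq\cO_X(a)\otimes\pi^*N$, that the reduction to $\cO_X(n)\otimes\pi^*D$ lands on the \emph{same} surface $X=\bP_C(L\oplus\cO_C)$ so that \pref{pr:crit_big} applies verbatim, and above all that the constraint $n>0$ singles out exactly the two admissible values of the $\cO_X(1)$-coefficient of $M$ rather than a larger range. Once this normal form is pinned down, the two cases together with their degree inequalities drop out of \pref{pr:crit_big} with no further work.
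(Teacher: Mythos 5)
Your strategy---split $M\simeq\cO_X(a)\otimes\pi^*N$ using $\Pic X=\pi^*\Pic C\oplus\bZ\,\cO_X(1)$, substitute $\cO_X(K_X)\simeq\cO_X(-2)\otimes\pi^*(\omega_C\otimes L)$, and read everything off from \pref{pr:crit_big}---is exactly the derivation the paper intends (it gives no proof at all, calling \pref{cr:big} immediate). But the step you yourself singled out as the one needing care is precisely where the argument fails. Take the reading of the statement that is forced by the paper's later applications, namely that the bundle in question is $\cO_X(-K_X)\otimes M^{-1}$, i.e.\ $-K_X-M$ (this is how \pref{cr:big} is invoked: ``$-K_X-D$ is big''). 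Then the fiber coefficient is $n=2-a$, so the constraint $n>0$ from \pref{pr:crit_big} gives $a\le 1$, \emph{not} $a\in\{0,1\}$. The constraint does not isolate the two shapes in the statement: for instance $M=\cO_X(-1)\otimes\pi^*N$ with $\deg N$ sufficiently negative makes $-K_X-M=\cO_X(3)\otimes\pi^*(\omega_C^{-1}\otimes L^{-1}\otimes N^{-1})$ big, yet $M$ is of neither stated shape. So the ``only if'' direction is false for arbitrary line bundles $M$ and cannot drop out of $n>0$ alone; one needs the extra input, implicit in the paper's use of the corollary, that $M=\cO_X(D)$ for an \emph{effective} divisor $D$, since restricting an effective $M$ to a general fiber forces $a\ge 0$ and only then does $a\in\{0,1\}$ follow.

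Second, your assertion that the sign bookkeeping ``recovers'' the stated inequalities is not correct, and carrying it out honestly would have exposed inconsistencies in the statement itself. Read literally, the bundle is $\cO_X(K_X)\otimes M^{-1}=\cO_X(-2-a)\otimes\pi^*(\omega_C\otimes L\otimes N^{-1})$, whose fiber coefficient $-2-a$ is negative for $a=0$ and $a=1$; by \pref{pr:crit_big} it is then never big, so the ``if'' direction would be vacuously false---the statement must mean $-K_X-M$. With that correction, case 2 does come out as stated ($-K_X-M=\cO_X(1)\otimes\pi^*(\omega_C^{-1}\otimes L^{-1}\otimes L_C^{-1})$ is big iff $\deg L_C<-(2g-2)$), but case 1 gives $2\deg L-(2g-2)-\deg L-\deg L_C>0$, i.e.\ $\deg L-\deg L_C>2g-2$, \emph{not} $\deg L_C+\deg L>2g-2$ as printed (equivalently, the printed inequality is correct only if one reads $M^{-1}\simeq\pi^*L_C$ there). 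A proof that simply declares the printed conditions to ``drop out with no further work'' therefore cannot be right: the careful degree accounting you correctly identified as the crux shows the statement needs amending (effectivity of $M$, and the sign in case 1) before the equivalence can be established.
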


Let $f \colon \Xtilde \to X $ be the blow-up of X at a finite set of points which is contained in the support of an effective divisor $D$.
If $-K_X - D$ is big, $-K_{\Xtilde}$ is also big.
In fact, the second term of the right hand side of the following equation is effective, and the first term is big.
\begin{align*} 
-K_{\Xtilde} = - f^* K_X - E  
= - f^* \lb K_X + D \rb + \lb  f^*D -E \rb.
\end {align*}
Furthermore, by the same argument, we can see that the bigness of the anti-canonical divisor is preserved under the successive blow-ups centered at points in the strict transforms of $D$.
Thanks to this observation, we obtain the following examples.

\begin{example}
Let
$
L, X
$
be as in \pref{pr:crit_big}, and assume moreover
$
\deg L > 2 g- 1
$.
Set
$
 k \coloneqq \deg L - (2g-1)
$
and choose distinct fibers
$
 F_1, \dots, F_k
$
of the morphism $\pi$. Let
$
 S \subset \bigcup _{ k } F_k 
$
be a finite subset and let
$
 f \colon \Xtilde \to X
$ be the blow-up of $X$ along $S$.
Then 
$
 - K _{ \Xtilde }
$
is big.
This follows from the bigness of $-K_X - \sum_i F_i $ and \pref{cr:big}.
Moreover, the anti-canonical divisor is still big whenever we blow up at points in the strict transform of $\bigcup _{ k } F_k $.
\end{example}

\begin{example}
Suppose $g(C)=1$ and $\deg L = 1$.
Let $D$ be an effective divisor corresponding to a global section of
$
 \cO _{ X }(1) \otimes \pi ^* L ^{ - 1 }
$
(consider the case when it exists).
Then $-K_X - D$ is big by \pref{cr:big}, so that for the blow-up $\Xtilde$ of $X$ in a finite set of points on $D$
the anti-canonical sheaf $-K_{\Xtilde}$ is always big.
\end{example}

\bibliographystyle{amsalpha}
\bibliography{bib}

\end{document}